\documentclass[10pt,twoside]{amsart} 
\usepackage{amsmath, amsthm, amscd, amsfonts, amssymb, graphicx, color}
\usepackage[bookmarksnumbered, colorlinks, plainpages]{hyperref}

\newtheorem{theorem}{Theorem}[section]

\newtheorem{proposition}[theorem]{Proposition}
\newtheorem{corollary}[theorem]{Corollary}

\theoremstyle{definition}
\newtheorem{definition}[theorem]{Definition}
\newtheorem{example}[theorem]{Example}

\newtheorem{remark}[theorem]{Remark}

\numberwithin{equation}{section}

\def\<{\langle}
\def\>{\rangle}

\long\def\alert#1{\smallskip{\hskip\parindent\vrule%
\vbox{\advance\hsize-2\parindent\hrule\smallskip\parindent.4\parindent%
\narrower\noindent#1\smallskip\hrule}\vrule\hfill}\smallskip}

\pagestyle{plain}
%\date \today
\begin{document}
\title[Weakly classical prime submodules]{Weakly classical prime submodules}
\author[Mostafanasab, Tekir and Hakan Oral]{Hojjat Mostafanasab, \"{U}nsal Tekir and K\"{u}r\c{s}at Hakan Oral}

\subjclass[2010]{Primary: 13A15; secondary: 13C99; 13F05}
\keywords{Weakly prime submodule, Classical prime submodule, Weakly classical prime submodule.}

\begin{abstract}
In this paper, all rings are commutative with nonzero identity.
Let $M$ be an $R$-module. A proper submodule $N$ of $M$ 
is called a {\it classical prime submodule}, if for each  
$m\in M$ and elements $a,b\in R$, $abm\in N$ implies
that $am\in N$ or $bm\in N$. We introduce the 
concept of ``weakly classical prime submodules''. A proper submodule
$N$ of $M$ is a {\it weakly classical prime submodule} if whenever
$a,b\in R$ and $m\in M$ with $0\neq abm\in N$,
then $am\in N$ or $bm\in N$.
\end{abstract}
\maketitle

\section{Introduction} \label{sect1}
Throughout this paper all rings are commutative with nonzero identity and 
all modules are considered to be unitary. Several authors have extended the notion
of prime ideals to modules, see, for example \cite{D,L,MM}. Let $M$ be a module
over a commutative ring $R$. A proper submodule $N$ of $M$ is called \textit{prime} 
if for $a\in R$ and $m\in M$, $am\in N$ implies that $m\in N$ or $a\in (N:_{R}M)=\{r\in R\mid rM\subseteq N\}$. 
Anderson and Smith \cite{AS} said that a proper ideal $I$ of a ring $R$ is {\it weakly prime}
if whenever $a,b\in R$ with $0\neq ab\in I$, then $a\in I$ or $b\in I$.
Weakly prime submodules were introduced by Ebrahimi Atani and Farzalipour in \cite{E1}. A proper
submodule $N$ of $M$ is called \textit{weakly prime} if for $a\in R$ and $m\in M$
with $0\neq am\in N$, either $m\in N$ or $a\in (N:_{R}M)$.
A proper submodule $N$ of $M$ is called a \textit{classical prime submodule}, if for
each $m\in M$ and $a,b\in R$, $abm\in N$ implies that $am\in N$ or $%
bm\in N$. This notion of classical prime submodules has been extensively
studied by Behboodi in \cite{B1,B2} (see also, \cite{BK}, in which, the
notion of classical prime submodules is named \textquotedblleft weakly prime submodules\textquotedblright).
For more information on classical prime submodules, the reader is referred to \cite{A1,A,BSh}.

The annihilator of $M$ which is denoted by ${\rm Ann}_{R}(M)$ is $(0:_{R}M)$. 
Furthermore, for every $m\in M$, $(0:_Rm)$ is denoted by ${\rm Ann}_R(m)$.
When  ${\rm Ann}_{R}(M)=0$, $M$ is called a {\it faithful $R$-module}. An $R$-module $M$ is called
a \textit{multiplication module} if every submodule $N$ of $M$ has the form $%
IM$ for some ideal $I$ of $R$, see \cite{ES}. Note that, since $I\subseteq (N:_{R}M)$ then $%
N=IM\subseteq (N:_{R}M)M\subseteq N$. So that $N=(N:_{R}M)M$.
Finitely generated faithful multiplication modules are cancellation modules 
\cite[ Corollary to Theorem 9]{S}, where an $R$-module $M$ is defined to be
a \textit{cancellation module} if $IM=JM$ for ideals $I$ and $J$ of $R$
implies $I=J$. Let $N$ and $K$ be submodules of a multiplication 
$R$-module $M$ with $N=I_{1}M$ and $K=I_{2}M$ for some ideals $I_{1}$ and $%
I_{2}$ of $R$. The product of $N$ and $K$ denoted by $NK$ is defined by $%
NK=I_{1}I_{2}M$. Then by \cite[Theorem 3.4]{Am}, the product of $N$ and $K$
is independent of presentations of $N$ and $K$. Moreover, for $m,m'\in M$, by 
$mm'$, we mean the product of $Rm$ and $Rm'$. Clearly, $NK$ is a submodule of $%
M$ and $NK\subseteq N\cap K$ (see \cite{Am}). Let $N$ be a proper submodule
of a nonzero $R$-module $M$. Then the $M$-radical of $N$, denoted by $M$-$%
\mathrm{rad}(N)$, is defined to be the intersection of all prime submodules
of $M$ containing $N$. If $M$ has no prime submodule containing $N$, then we
say $M$-$\mathrm{rad}(N)=M$. It is shown in \textrm{\cite[Theorem 2.12]{ES}}
that if $N$ is a proper submodule of a multiplication $R$-module $M$, then $%
M $-$\mathrm{rad}(N)=\sqrt{(N:_{R}M)}M$. In \cite{Q}, Quartararo et al. said that a commutative ring $R$ is a $u$%
-ring provided $R$ has the property that an ideal contained in a finite
union of ideals must be contained in one of those ideals; and a $um$-ring is
a ring $R$ with the property that an $R$-module which is equal to a finite
union of submodules must be equal to one of them. 
They show that every B$\acute{\rm e}$zout ring is a $u$-ring. Moreover, they proved that 
every Pr\"{u}fer domain is a $u$-domain. Also, any ring which contains an infinite field
as a subring is a $u$-ring, \cite[Exercise 3.63]{Sh}.

In this paper we introduce the concept of weakly classical prime
submodules. A proper submodule $N$ of an $R$-module $M$ is called a
{\it weakly classical prime submodule} if whenever
$a,b\in R$ and $m\in M$ with $0\neq abm\in N$,
then $am\in N$ or $bm\in N$. Clearly, every classical prime submodule is a weakly classical prime submodule.
Among many results in this paper, it is shown (Theorem \ref{main}) that $N$ is a weakly classical prime submodule of
an $R$-module $M$ if and only if for every ideals $I,~J$ of $R$ and $m\in M$ with $0\neq IJm\subseteq N$, either
$Im\subseteq N$ or $Jm\subseteq N$. It is proved  (Theorem \ref{T2})  that if $N$ is a weakly classical prime submodule of an 
$R$-module $M$ that is not classical prime, then $(N:_{R}M)^{2}N=0$. It is shown (Theorem \ref{main2}) that  over a $um$-ring $R$, 
$N$ is a weakly classical prime submodule of an $R$-module $M$ if and only if for every ideals $I,~J$ of $R$ and submodule
$L$ of $M$ with $0\neq IJL\subseteq N$, either $IL\subseteq N$ or $JL\subseteq N$.
Let $R=R_{1}\times R_{2}\times R_3$ be a decomposable ring and $M=M_{1}\times M_{2}\times M_3$ be an $R$-module where
$M_{i}$ is an $R_{i}$-module, for $i=1,2,3$. In Theorem \ref{product3}
it is proved that if $N$ is a weakly classical prime submodule of $M$, then either $N=\{(0,0,0)\}$ or $N$ is a classical prime submodule of $M$.
Let $R$ be a $um$-ring, $M$ be an $R$-module and $F$ be a faithfully flat $R$-module. It is shown (Theorem \ref{flat}) that $N$ is a
weakly classical prime submodule of $M$ if and only if $F\otimes N$ is a
weakly classical prime submodule of $F\otimes M.$

\bigskip

\section{Properties of weakly classical prime submodules}

First of all we give a module which has no nonzero weakly classical 
prime submodule.
\begin{example}
 Let $p$ be a fixed prime integer and $\mathbb{N}_{0}=\mathbb{N}\cup \left\{ 0\right\}.$
Then $$E\left( p\right) :=\left\{ \alpha \in \mathbb{Q}/\mathbb{Z}\mid\alpha =\frac{r}{p^{n}}+\mathbb{Z}
\text{ \ for some }r\in\mathbb{Z}\text{ and }n\in \mathbb{N}_{0}\right\}$$ is a nonzero
submodule of the $\mathbb{Z}$-module $\mathbb{Q}/\mathbb{Z}.$ For each $t\in \mathbb{N}_{0},$ set $$G_{t}:=\left\{
\alpha \in \mathbb{Q}/\mathbb{Z}\mid\alpha =\frac{r}{p^{t}}+\mathbb{Z}\text{ \ for some }r\in \mathbb{Z}\right\}.$$ 
Notice that for each $t\in \mathbb{N}_{0}$, $G_{t}$ is a submodule of $E\left( p\right) $ generated by $\frac{1}{%
p^{t}}+\mathbb{Z}$ for each $t\in \mathbb{N}_{0}.$ Each proper submodule of $E\left(
p\right) $ is equal to $G_{i}$ for some $i\in \mathbb{N}_{0}\left( \text{see, \cite[Example 7.10]{Sh}}%
\right).$ However, no $G_{t}$ is a weakly classical prime submodule of $%
E\left( p\right).$ Indeed, $\frac{1}{p^{t+2}}+\mathbb{Z}\in E\left(
p\right) $. Then $0\neq p^{2}\left( \frac{1}{p^{t+2}}+\mathbb{Z}\right) =\frac{1}{p^{t}}%
+\mathbb{Z}\in G_{t}$  but $p\left( \frac{1}{p^{t+2}}+\mathbb{Z}\right) =\frac{1}{%
p^{t+1}}+\mathbb{Z}\notin G_{t}$. 
\end{example}

\begin{theorem}\label{colon}
Let $M$ be an $R$-module and $N$ a proper submodule of $M$. 
\begin{enumerate}
\item If $N$ is a weakly classical prime submodule of $M$, then $\left( N:_Rm\right) $ is a weakly 
prime ideal of $R$ for every $m\in M\backslash N$ with $Ann_R(m)=0$. 

\item If $\left( N:_Rm\right) $ is a weakly prime ideal of $R$ for every $m\in M\backslash N$, then $N$ 
is a weakly classical prime submodule of $M$.
\end{enumerate}
\end{theorem}

\begin{proof}
(1) Suppose that $N$ is a weakly classical prime submodule. Let $m\in M\backslash N$ with ${\rm Ann}_R(m)=0$, 
and $0\neq ab\in \left( N:_Rm\right)$ for some $a,b\in R$. Then $0\neq abm\in N$. So $am\in N$ or $bm\in N$. 
Hence $a\in \left( N:_Rm\right)$ or $b\in \left( N:_Rm\right) $.

(2) Assume that $\left( N:_Rm\right) $ is a weakly prime ideal of $R$ for every $m\in M\backslash N$.
Let $0\neq abm\in N$ for some $m\in M$ and $a,b\in R$. If $m\in N$, then we are done. So
we assume that $m\notin N$. Hence $0\neq ab\in(N:_Rm)$ implies that either $a\in (N:_Rm)$
or $b\in (N:_Rm)$. Therefore either $am\in N$ or $bm\in N$, and so $N$ is weakly classical prime.
\end{proof}

We recall that $M$ is a torsion-free $R$-module if and only if for every $0\neq m\in M$, ${\rm Ann}_R(m)=0$.
As a direct consequence of Theorem \ref{colon} the following result follows.
\begin{corollary}
Let $\ M$ be a torsion-free $R$-module and $N$ a proper submodule of $M$. Then 
$N$ is a weakly classical prime submodule of $M$ if and only if $\left( N:_Rm\right) $ is a weakly prime 
ideal of $R$ for every $m\in M\backslash N$.
\end{corollary}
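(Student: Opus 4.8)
The plan is to deduce both implications directly from Theorem~\ref{colon}, using the torsion-free hypothesis only to discharge the side condition $\mathrm{Ann}_R(m)=0$ that appears in part~(1). The first thing I would note is that since $N$ is a submodule it contains $0$, so any $m\in M\backslash N$ is automatically nonzero. Combining this with the characterization recalled just before the corollary---that $M$ is torsion-free precisely when $\mathrm{Ann}_R(m)=0$ for every nonzero $m$---gives $\mathrm{Ann}_R(m)=0$ for \emph{every} $m\in M\backslash N$. This single observation is what makes the corollary a genuine corollary rather than a fresh argument.

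For the forward direction, I would assume $N$ is a weakly classical prime submodule. By the observation above, every $m\in M\backslash N$ satisfies $\mathrm{Ann}_R(m)=0$, so the hypothesis of part~(1) of Theorem~\ref{colon} is met for all such $m$ with no restriction; its conclusion then yields that $\left(N:_Rm\right)$ is a weakly prime ideal of $R$ for every $m\in M\backslash N$, which is exactly what is wanted.

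For the reverse direction, there is nothing new to establish: part~(2) of Theorem~\ref{colon} already states that if $\left(N:_Rm\right)$ is weakly prime for every $m\in M\backslash N$, then $N$ is a weakly classical prime submodule, and that implication carries no torsion-freeness assumption. Hence it applies verbatim.

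Since both directions reduce immediately to the two halves of Theorem~\ref{colon}, I do not anticipate any real obstacle. The only point that requires a moment's care is the bookkeeping that $m\notin N$ forces $m\neq0$, since this is precisely the step that lets the annihilator condition in part~(1) be dropped; without the torsion-free hypothesis one could not guarantee $\mathrm{Ann}_R(m)=0$, and the forward implication would generally fail.
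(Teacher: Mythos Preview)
Your proposal is correct and follows exactly the approach the paper intends: the paper offers no explicit proof beyond noting the corollary follows directly from Theorem~\ref{colon}, and you have simply spelled out how the torsion-free hypothesis discharges the annihilator condition in part~(1) while part~(2) handles the converse unchanged.
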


\begin{theorem}
\label{im} Let $f:M\to M^{\prime}$ be a homomorphism of $R$-modules.
\begin{enumerate}
\item Suppose that $f$ is a monomorphism. 
If $N^{\prime}$ is a weakly classical prime submodule of $M^{\prime}$ with $f^{-1}(N^{\prime })\neq M$,
then $f^{-1}(N^{\prime})$ is a weakly classical prime submodule of $M$.
\item Suppose that $f$ is an epimorphism. If $N$ is a weakly classical prime submodule of $M$
containing $Ker(f)$, then $f(N)$ is a weakly classical prime submodule of $%
M^{\prime}$.
\end{enumerate}
\end{theorem}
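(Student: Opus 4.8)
The plan is to verify the defining condition of a weakly classical prime submodule directly in each case, transporting a relation of the form $0\neq abm\in N$ across $f$ and using the corresponding injectivity or surjectivity hypothesis to keep track of the nonzero requirement.

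For part (1), I would start with elements $a,b\in R$ and $m\in M$ satisfying $0\neq abm\in f^{-1}(N')$. Applying $f$ gives $ab\,f(m)=f(abm)\in N'$, and here injectivity of $f$ is exactly what is needed to conclude $f(abm)\neq 0$ from $abm\neq 0$; thus $0\neq ab\,f(m)\in N'$. Since $N'$ is weakly classical prime, either $a\,f(m)\in N'$ or $b\,f(m)\in N'$, that is $f(am)\in N'$ or $f(bm)\in N'$, which translates back to $am\in f^{-1}(N')$ or $bm\in f^{-1}(N')$. Properness of $f^{-1}(N')$ is guaranteed by the hypothesis $f^{-1}(N')\neq M$, so nothing more is required.

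For part (2), I would first check that $f(N)$ is proper. If $f(N)=M'$, then for any $m\in M$ surjectivity produces $n\in N$ with $f(m)=f(n)$, whence $m-n\in\ker f\subseteq N$ and so $m\in N$; this forces $N=M$, contradicting properness. Next, given $a,b\in R$ and $m'\in M'$ with $0\neq abm'\in f(N)$, I would lift $m'$ to some $m\in M$ via surjectivity, so that $f(abm)=abm'\neq 0$ and hence $abm\neq 0$. Writing $abm'=f(n)$ for some $n\in N$ gives $abm-n\in\ker f\subseteq N$, so $0\neq abm\in N$. Weakly classical primeness of $N$ then yields $am\in N$ or $bm\in N$, and applying $f$ gives $am'=f(am)\in f(N)$ or $bm'=f(bm)\in f(N)$, as desired.

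The only delicate point in either argument is the bookkeeping of the nonzero hypothesis: in (1) it is the injectivity of $f$ that prevents $f(abm)$ from collapsing to zero, while in (2) the containment $\ker f\subseteq N$ is used twice: once to force $f(N)$ to be proper, and once to pull the membership $abm'\in f(N)$ back to $abm\in N$. Neither step presents a genuine obstacle, and the remaining computations are routine.
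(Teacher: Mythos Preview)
Your proof is correct and follows essentially the same line as the paper's: apply $f$ (using injectivity in (1) and surjectivity plus $\ker f\subseteq N$ in (2)) to transport the relation $0\neq abm$ across, invoke weakly classical primeness on the other side, and translate back. Your explicit verification that $f(N)$ is proper in part~(2) is a welcome addition that the paper's argument leaves implicit.
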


\begin{proof}
$(1)$ Suppose that $N^{\prime }$ is a weakly classical prime submodule of $M^{\prime }$ with $f^{-1}(N^{\prime })\neq M$.
Let $0\neq abm\in f^{-1}(N^{\prime })$ for some $a,b\in R$ and $m\in M$. Since $f$ is a monomorphism, 
$0\neq f\left( abm\right) \in N^{\prime }$. So we get $0\neq abf(m)\in
N^{\prime }$. Hence $f(am)=af(m)\in N^{\prime }$ or $f(bm)=bf(m)\in
N^{\prime }$. Thus $am\in f^{-1}(N^{\prime })$ or $bm\in f^{-1}(N^{\prime })$.
Therefore   $f^{-1}(N^{\prime})$ is a weakly classical prime submodule of $M$.

(2) Assume that $N$ is a weakly classical prime submodule of $M$. 
Let $a,b\in R$ and $m^{\prime}\in M^{\prime}$ be such that $0\neq abm^{\prime}\in f(N)$.
By assumption there exists $m\in M$ such that $m^{\prime}=f(m)$ and so $%
f(abm)\in f(N)$. Since $Ker(f)\subseteq N$, we have $0\neq abm\in N$. It implies
that $am\in N$ or $bm\in N$. Hence $am^{\prime}\in f(N)$ or 
$bm^{\prime}\in f(N)$. Consequently $f(N)$ is a weakly classical prime submodule of $M^{\prime}$.
\end{proof}

As an immediate consequence of Theorem \ref{im}(2) we have the following
corollary.

\begin{corollary}\label{quo} 
Let $M$ be an $R$-module and $L\subset N$ be submodules of $M$.
If $N$ is a weakly classical prime submodule of $M$, then $N/L$ is a
weakly classical prime submodule of $M/L$.
\end{corollary}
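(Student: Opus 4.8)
The plan is to apply Theorem \ref{im}(2) to the canonical projection onto the quotient module. First I would let $\pi\colon M\to M/L$ denote the natural map defined by $\pi(m)=m+L$ for every $m\in M$. This is a surjective $R$-module homomorphism, that is, an epimorphism, and its kernel is precisely $Ker(\pi)=L$.

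Next I would check that the hypotheses of Theorem \ref{im}(2) are met. By assumption $N$ is a weakly classical prime submodule of $M$, and since $L\subset N$ we have $Ker(\pi)=L\subseteq N$, so $N$ contains the kernel of $\pi$. Thus $\pi$ is an epimorphism and $N$ is a weakly classical prime submodule containing $Ker(\pi)$, which is exactly what Theorem \ref{im}(2) requires.

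Applying Theorem \ref{im}(2), I would conclude that $\pi(N)$ is a weakly classical prime submodule of $M^{\prime}=M/L$. It then remains only to identify the image: $\pi(N)=\{n+L\mid n\in N\}=N/L$, which yields the assertion. There is essentially no obstacle here, since the statement is a direct specialization of the preceding theorem; the only minor point worth noting is that $N/L$ is a proper submodule of $M/L$, but this is automatic because $N\subsetneq M$ forces $N/L\subsetneq M/L$, and in any case properness is already subsumed in the conclusion of Theorem \ref{im}(2).
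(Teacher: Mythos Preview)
Your proof is correct and follows exactly the approach indicated in the paper, which states the corollary as an immediate consequence of Theorem \ref{im}(2). Your explicit verification that the canonical projection $\pi:M\to M/L$ satisfies the hypotheses of that theorem, together with the identification $\pi(N)=N/L$, is precisely the intended argument.
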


\begin{theorem}
Let $K$ and $N$ be submodules of $M$ with $K\subset N\subset M$. If $K$ is a
weakly classical prime submodule of $M$ and $N/K$ is a weakly classical prime 
submodule of $M/K$, then $N$ is a weakly classical prime submodule of $M$.
\end{theorem}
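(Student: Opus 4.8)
The plan is to verify the defining property of a weakly classical prime submodule directly for $N$, splitting into two cases according to whether the relevant product lands inside $K$. Note first that $N$ is proper, since $N\subset M$. So let $a,b\in R$ and $m\in M$ satisfy $0\neq abm\in N$; I must produce $am\in N$ or $bm\in N$.

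First I would treat the case $abm\in K$. Here $0\neq abm\in K$, and since $K$ is a weakly classical prime submodule of $M$, the definition yields $am\in K$ or $bm\in K$. Because $K\subset N$, either alternative already gives $am\in N$ or $bm\in N$, as desired.

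The remaining case is $abm\notin K$. I would pass to the quotient $M/K$ and work with the coset $\overline{m}=m+K$. Since $abm\in N$ we have $ab\overline{m}=abm+K\in N/K$, and since $abm\notin K$ the coset $ab\overline{m}$ is nonzero in $M/K$; thus $0\neq ab\overline{m}\in N/K$. By hypothesis $N/K$ is a weakly classical prime submodule of $M/K$, so $a\overline{m}\in N/K$ or $b\overline{m}\in N/K$, which is to say $am+K\in N/K$ or $bm+K\in N/K$, whence $am\in N$ or $bm\in N$.

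The argument has no serious obstacle; the one point requiring care is the bookkeeping of the nonzero hypotheses. The definition of weakly classical prime applies only when the product is nonzero, so in the quotient I must guarantee $ab\overline{m}\neq\overline{0}$. This is precisely what the case distinction $abm\in K$ versus $abm\notin K$ secures: the first case is absorbed by the weak classical primeness of $K$, and in the second the condition $abm\notin K$ is exactly what makes $ab\overline{m}$ nonzero so that the weak classical primeness of $N/K$ can be invoked.
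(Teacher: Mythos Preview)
Your proof is correct and follows essentially the same approach as the paper's: a case split on whether $abm\in K$, using the weak classical primeness of $K$ in the first case and of $N/K$ in the second. Your write-up is in fact slightly more careful than the paper's, explicitly noting that $N$ is proper and spelling out why the nonzero hypothesis survives passage to the quotient.
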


\begin{proof}
Let $a,b\in R$, $m\in M$ and $0\neq abm\in N$. If $abm\in K$, then $am\in K\subset N$ 
or $bm\in K\subset N$ as it is needed. Thus, assume that $abm\not\in K$. Then 
$0\neq ab(m+K)\in N/K$, and so $a(m+K)\in N/K$ or $b(m+K)\in N/K$. It means that 
$am\in N$ or $bm\in N$, which completes the proof.
\end{proof}

For an $R$-module $M$, the set of zero-divisors of $M$ is denoted by $Z_R(M)$.
\begin{theorem}
Let $M$ be an $R$-module, $N$ be a submodule and $S$ be a multiplicative
subset of $R$. 
\begin{enumerate}
\item If $N$ is a weakly classical prime submodule of $M$ such that $\left( N:_RM\right)\cap S=\emptyset$,
then $S^{-1}N$ is a weakly classical prime submodule of $S^{-1}M$.
\item If $S^{-1}N$ is a weakly classical prime submodule of $S^{-1}M$ such that $S\cap Z_R(N)=\emptyset$
and $S\cap Z_R(M/N)=\emptyset$, then $N$ is a weakly classical prime submodule of $M$.
\end{enumerate}
\end{theorem}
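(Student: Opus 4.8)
The plan is to prove the two implications separately, in each case transferring the defining property of a weakly classical prime submodule across the localization map by clearing denominators, while carefully tracking that the ``nonzero'' hypothesis $0\neq abm$ is preserved. This bookkeeping of the nonzero condition is precisely what distinguishes the weakly classical case from the classical one, and is where I expect the real work to lie; the rest is routine fraction manipulation.

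For part (1), assume $N$ is weakly classical prime with $(N:_RM)\cap S=\emptyset$. First I would record that $S^{-1}N$ is a \emph{proper} submodule of $S^{-1}M$: the role of the hypothesis $(N:_RM)\cap S=\emptyset$ is exactly to keep $1/1$ out of the colon ideal $\left(S^{-1}N:_{S^{-1}R}S^{-1}M\right)$, which is what $S^{-1}N=S^{-1}M$ would force. For the main property, take $a/s,b/t\in S^{-1}R$ and $m/u\in S^{-1}M$ with $0\neq (a/s)(b/t)(m/u)\in S^{-1}N$. Writing this element as $n/v$ with $n\in N$ and $v\in S$, and clearing denominators, produces some $k\in S$ with $ab(km)=abkm\in N$. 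The crucial point is that this element is nonzero: if $abkm=0$ then, since $k\in S$, the fraction $(abm)/(stu)$ would already vanish, contradicting the hypothesis. Hence $0\neq ab(km)\in N$, and weak classical primeness of $N$ gives $a(km)\in N$ or $b(km)\in N$. I then push the conclusion back up: from $a(km)\in N$ one gets $(a/s)(m/u)=(akm)/(suk)\in S^{-1}N$ (the factor $k\in S$ cancels), and symmetrically in the other case.

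For part (2), assume $S^{-1}N$ is weakly classical prime, $S\cap Z_R(N)=\emptyset$ and $S\cap Z_R(M/N)=\emptyset$. Properness of $N$ is immediate, since $S^{-1}N\neq S^{-1}M$ forces $N\neq M$. Now take $a,b\in R$ and $m\in M$ with $0\neq abm\in N$ and consider the image $(abm)/1=(a/1)(b/1)(m/1)\in S^{-1}N$. Here the hypothesis $S\cap Z_R(N)=\emptyset$ does the first job: it guarantees $(abm)/1\neq 0$, because if some $s\in S$ killed the nonzero element $abm\in N$ then $s$ would be a zero-divisor on $N$. Applying weak classical primeness of $S^{-1}N$ yields $(am)/1\in S^{-1}N$ or $(bm)/1\in S^{-1}N$. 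Say $(am)/1\in S^{-1}N$; clearing denominators gives $s'(am)\in N$ for some $s'\in S$. Now the second hypothesis $S\cap Z_R(M/N)=\emptyset$ does the final job: since $s'(am)\in N$ and $s'$ is not a zero-divisor on $M/N$, we must have $am\in N$; the case $(bm)/1\in S^{-1}N$ yields $bm\in N$ symmetrically.

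The main obstacle, as flagged, is controlling the nonzero condition through localization in both directions: in part (1) one must verify that the element obtained after clearing denominators is genuinely nonzero (this is where $k\in S$ is used), and in part (2) the two separate zero-divisor hypotheses are exactly what is needed---the condition $S\cap Z_R(N)=\emptyset$ to keep the image $(abm)/1$ nonzero, and the condition $S\cap Z_R(M/N)=\emptyset$ to recover genuine membership in $N$ from membership after multiplication by an element of $S$. Everything beyond these two points reduces to standard manipulation of fractions and the definition of equality in a localization.
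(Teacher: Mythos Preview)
Your proposal is correct and follows essentially the same line as the paper's own proof: in (1) you clear denominators to obtain $0\neq ab(km)\in N$, apply the definition, and push back to $S^{-1}N$; in (2) you pass to $S^{-1}N$, use $S\cap Z_R(N)=\emptyset$ to preserve nonzeroness, apply the definition there, and use $S\cap Z_R(M/N)=\emptyset$ to descend. You are in fact slightly more explicit than the paper, which does not spell out the properness of $S^{-1}N$ at all; note however that your properness sketch implicitly uses $\bigl(S^{-1}N:_{S^{-1}R}S^{-1}M\bigr)=S^{-1}(N:_RM)$, an equality that in general requires $M$ to be finitely generated, so that step would need a word of care if you wanted a fully rigorous version.
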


\begin{proof}
(1) Let $N$ be a weakly classical prime submodule of $M$ and 
$\left( N:_RM\right) \cap S=\emptyset$. Suppose that 
$0\neq\frac{a_{1}}{s_{1}}\frac{a_{2}}{s_{2}}\frac{m}{s_{3}}\in S^{-1}N$
for some $a_1,a_2\in R$, $s_1,s_2,s_3\in S$ and $m\in M$.
Then there exists $s\in S$ such that $sa_1a_2m\in N$. 
If $sa_1a_2m=0$, then $\frac{a_{1}}{s_{1}}\frac{a_{2}}{s_{2}}\frac{m}
{s_{3}}=\frac{sa_1a_2m}{ss_1s_2s_3}=\frac{0}{1}$, a contradiction.
Since $N$ is a weakly classical prime submodule, then we have $a_{1}\left(
sm\right)\in N$ or $a_{2}\left( sm\right) \in N$. Thus 
$\frac{a_{1}}{s_{1}}\frac{m}{s_{3}}=\frac{sa_{1}m}{ss_{1}s_3}\in S^{-1}N$ or
$\frac{a_{2}}{s_{2}}\frac{m}{s_{3}}=\frac{sa_{2}m}{ss_{2}s_3}\in S^{-1}N$.
Consequently $S^{-1}N$ is a weakly classical prime submodule of $S^{-1}M$.

(2) Suppose that $S^{-1}N$ is a weakly classical prime submodule of $S^{-1}M$ and $S\cap Z_R(N)=\emptyset$
and $S\cap Z_R(M/N)=\emptyset$. Let $a,b\in R$ and $m\in M$ such that $0\neq abm\in N$. Then $\frac{a}{1}\frac{b}{1}\frac{m}{1}\in S^{-1}N$. If $\frac{a}{1}\frac{b}{1}\frac{m}{1}=\frac{0}{1}$, then there exists $s\in S$ such that $sabm=0$
which contradicts $S\cap Z_R(N)=\emptyset$. Therefore $\frac{a}{1}\frac{b}{1}\frac{m}{1}\neq\frac{0}{1}$, and so either $\frac{a}{1}\frac{m}{1}\in S^{-1}N$ or $\frac{b}{1}\frac{m}{1}\in S^{-1}N$. We may assume that $\frac{a}{1}\frac{m}{1}\in S^{-1}N$. So there exists $u\in S$ such that $uam\in N$. But $S\cap Z_R(M/N)=\emptyset$, whence $am\in N$. Consequently $N$ is a weakly classical prime submodule of $M$.
\end{proof}

Darani \cite{YS} generalized the concept of prime submodules (resp. weakly prime
submodules) of a module over a commutative ring as
following: Let $N$ be a proper submodule of an $R$-module $M$. Then $N$ is said
to be a \textit{2-absorbing submodule (resp.  weakly 2-absorbing 
submodule) of} $M$ if whenever $a,b\in R$ and $m\in M$ with $abm\in N$ (resp. 
$0\neq abm\in N$), then $am\in N$ or $bm\in N$ or $ab\in (N:_{R}M)$.
\begin{proposition}\label{abs-class}
Let $N$ be a proper submodule of an $R$-module $M$. 
\begin{enumerate}
\item If $N$ is a weakly prime submodule of $M$, then $N$ is a weakly classical prime submodule of $M$.

\item If $N$ is a weakly classical prime submodule of $M$, then $N$ is a weakly 2-absorbing submodule of $M$. The 
converse holds if in addition $(N:_RM)$ is a weakly prime ideal of $R$.
\end{enumerate}
\end{proposition}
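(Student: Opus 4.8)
The plan is to treat the three implications separately, since each reduces quickly to the relevant definition. For part (1), suppose $N$ is weakly prime and take $a,b\in R$, $m\in M$ with $0\neq abm\in N$. I would simply regroup the product as $a(bm)$ and apply the weakly prime condition to the scalar $a$ and the element $bm\in M$: from $0\neq a(bm)\in N$ we get either $bm\in N$, in which case we are done, or $a\in(N:_RM)$, in which case $am\in aM\subseteq N$. Either alternative yields one of $am\in N$ or $bm\in N$, which is exactly the weakly classical prime condition. No obstacle arises here.

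For the forward direction of part (2), there is essentially nothing to prove: the weakly classical prime hypothesis already produces $am\in N$ or $bm\in N$ whenever $0\neq abm\in N$, and this disjunction is stronger than the one ($am\in N$ or $bm\in N$ or $ab\in(N:_RM)$) demanded by the weakly 2-absorbing condition. So I would just observe that the conclusion of the weakly classical prime definition implies the conclusion of the weakly 2-absorbing definition verbatim.

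For the converse in part (2), assume $N$ is weakly 2-absorbing and that $(N:_RM)$ is a weakly prime ideal. Given $a,b\in R$ and $m\in M$ with $0\neq abm\in N$, the weakly 2-absorbing property gives three cases: $am\in N$, $bm\in N$, or $ab\in(N:_RM)$. The first two finish immediately, so the work concentrates on the third. Here is the one point that needs care, and it is where I expect any friction to lie: to use the weakly prime hypothesis on $(N:_RM)$ I must know that $ab\neq0$. But this is automatic, since $0\neq abm$ forces $ab\neq0$ (otherwise $abm=0$). Thus $0\neq ab\in(N:_RM)$, and weak primeness of $(N:_RM)$ yields $a\in(N:_RM)$ or $b\in(N:_RM)$, hence $am\in N$ or $bm\in N$. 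This completes the weakly classical prime condition. The nonzero clause $0\neq abm$ built into the definitions is exactly what makes the weakly prime hypothesis applicable in this last step.
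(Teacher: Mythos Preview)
Your proposal is correct and follows essentially the same approach as the paper's proof: the same regrouping in part (1), the same observation that weakly classical prime trivially implies weakly 2-absorbing, and the same use of $0\neq abm\Rightarrow ab\neq 0$ to invoke weak primeness of $(N:_RM)$ in the converse. The only cosmetic difference is that the paper phrases the converse as a proof by contradiction (assuming $am\notin N$ and $bm\notin N$), while you do a direct case split; the logical content is identical.
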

\begin{proof}
(1) Assume that $N$ is a weakly prime submodule of $M$. Let $a,b\in R$ and $m\in M$
such that $0\neq abm\in N$. Therefore either $bm\in N$ or $a\in(N:_RM)$.
The first case leads us to the claim. In the second case we have that $am\in N$.
Consequently $N$ is a weakly classical prime submodule.

(2) It is evident that if $N$ is weakly classical prime, then it is weakly 2-absorbing. 
Assume that $N$ is a weakly 2-absorbing submodule of $M$ and $(N:_RM)$ is a weakly prime ideal of $R$. 
Let $0\neq abm\in N$ for some $a,b\in R$ and $m\in M$ such that neither $am\in N$ nor $bm\in N$. 
Then $0\neq ab\in(N:_RM)$ and so either $a\in(N:_RM)$ or $b\in(N:_RM)$. This contradiction 
shows that $N$ is weakly classical prime.
\end{proof}

The following example shows that the converse of Proposition \ref{abs-class}(1) is not true.
\begin{example}
Let $R=\mathbb{Z}$ and $M=\mathbb{Z}_p\bigoplus\mathbb{Z}\bigoplus\mathbb{Z}$ 
where $p$ is a prime integer. Consider the submodule $N=\{\overline{0}\}\bigoplus\{0\}\bigoplus\mathbb{Z}$
of $M$. Notice that $(\overline{0},0,0)\neq p(\overline{1},0,1)=(\overline{0},0,p)\in N$, but 
$(\overline{1},0,1)\notin N$. Also $p(\overline{1},1,1)\notin N$ which shows that $p\notin(N:_{\mathbb{Z}}M)$. Therefore $N$
is not a weakly prime submodule of $M$. Now, we show that $N$ is a
weakly classical prime submodule of $M$. Let $m,n,z,w\in \mathbb{Z}$ and $\overline{x}\in\mathbb{Z}_p$
be such that $(\overline{0},0,0)\neq mn(\overline{x},z,w)\in N$. Hence $\overline{mnx}=\overline{0}$ and $mnz=0$.
Therefore $p|mnx$ and $z=0$. So $p| m$ or $p|nx$. If $p|m$, then 
$m(\overline{x},z,w)=(\overline{mx},0,mw)=(\overline{0},0,mw)\in N$. Similarly, if
  $p|nx$, then $n(\overline{x},z,w)=(\overline{nx},0,nw)=(\overline{0},0,nw)\in N$.
 Consequently $N$ is a weakly classical prime submodule of $M$.
\end{example}

\begin{proposition}
Let $M$ be a cyclic $R$-module. Then a proper submodule $N$ of $M$ 
is a weakly prime submodule if and only if it is a weakly classical prime submodule.

\end{proposition}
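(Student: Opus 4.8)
The plan is to dispatch one direction immediately using the general theory already in place, and to obtain the reverse direction by exploiting cyclicity to factor every module element through a fixed generator.

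First, the implication ``weakly prime $\Rightarrow$ weakly classical prime'' requires no work here, since it holds for an arbitrary $R$-module by Proposition \ref{abs-class}(1). So I would only need to supply the converse under the cyclic hypothesis.

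For the converse, I would fix a generator and write $M = Rx$, and assume $N$ is weakly classical prime. To check the weakly prime condition, take $a \in R$ and $m \in M$ with $0 \neq am \in N$. Cyclicity lets me write $m = rx$ for some $r \in R$, so that $0 \neq (ar)x = a(rx) = am \in N$. Now I would apply the weakly classical prime hypothesis to the two ring elements $a, r$ and the module element $x$: from $0 \neq (ar)x \in N$ it yields $ax \in N$ or $rx \in N$. In the second case $m = rx \in N$ and the weakly prime condition is satisfied directly. In the first case, since $M = Rx$ gives $aM = R(ax) \subseteq N$, I conclude $a \in (N :_R M)$, again verifying the condition.

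The two places where cyclicity is genuinely used are the factorization $m = rx$, which is what reduces the single-element weakly prime test to the two-element weakly classical prime test, and the passage from $ax \in N$ to $a \in (N :_R M)$. The latter is the only step that could be called an obstacle: for a general module $ax \in N$ does not force $a$ into the colon ideal, whereas for $M = Rx$ membership of $ax$ in $N$ already controls all of $aM = R(ax)$. Neither step involves any computation beyond this, so I expect the argument to be short and essentially formal once the generator is introduced.
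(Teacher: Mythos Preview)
Your proof is correct and follows essentially the same route as the paper: both dispatch the forward direction via Proposition \ref{abs-class}(1), then fix a cyclic generator, factor an arbitrary element through it, and apply the weakly classical prime condition to the resulting two ring elements acting on the generator. The only difference is cosmetic (you name the generator $x$ and the test element $m$, while the paper does the reverse).
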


\begin{proof}
By Proposition \ref{abs-class}(1), the ``only if'' part holds.
Let $M=Rm$ for some $m\in M$ and $N$ be a weakly classical prime submodule of $M$. Suppose that 
$0\neq rx\in N$ for some $r\in R$ and $x\in M$. Then there exists an element $%
s\in R$ such that $x=sm$. Therefore $0\neq rx=rsm\in N$ and since $N$ is a
weakly classical prime submodule, $rm\in N$ or $sm\in N$. Hence $r\in (N:_RM)$
or $x\in N$. Consequently $N$ is a weakly prime submodule.
\end{proof}

\begin{example}
Let $R=\mathbb{Z}$ and $M=\mathbb{Z}_p\bigoplus\mathbb{Z}_q\bigoplus\mathbb{Q}$ 
where $p,~q$ are two distinct prime integers. One can easily see that the zero submodule of $M$
is a weakly classical prime submodule. Notice that $pq(\overline{1},\overline{1},0)=(\overline{0},\overline{0},0)$, but $p(\overline{1},\overline{1},0)\not=(\overline{0},\overline{0},0)$ and
$q(\overline{1},\overline{1},0)\not=(\overline{0},\overline{0},0)$. So the zero submodule of $M$ is not classical prime. 
Hence the two concepts of classical prime submodules and of weakly classical prime submodules
are different in general.
\end{example}

\begin{definition}
Let $N$ be a proper submodule of $M$ and $a,b\in R,$ $m\in M.$
If $N$ is a weakly classical prime submodule and $abm=0,$ $am\notin N$, $bm\notin N$, then $(a,b,m)$ is called a 
{\it classical triple-zero of} $N$.
\end{definition}

\begin{theorem}\label{le1}Let $N$ be a weakly classical prime submodule of an $R$-module $M$ and
suppose that $abK\subseteq N$ for some $a,b\in R$ and some submodule $K$ of 
$M$. If $(a,b,k)$ is not a classical triple-zero of $N$ for every $k\in K$, then $aK\subseteq N$ or
$bK\subseteq N$.
\end{theorem}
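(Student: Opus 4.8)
The plan is to reduce everything to the elementary fact that a module cannot be written as the union of two proper submodules. First I would fix an arbitrary $k\in K$ and show that $ak\in N$ or $bk\in N$. Since $abK\subseteq N$, we have $abk\in N$. There are two cases. If $abk\neq 0$, then because $N$ is weakly classical prime we immediately get $ak\in N$ or $bk\in N$. If instead $abk=0$, then the hypothesis that $(a,b,k)$ is not a classical triple-zero of $N$ forbids the simultaneous failure $ak\notin N$ and $bk\notin N$, so again $ak\in N$ or $bk\in N$. Hence in every case, for each $k\in K$ at least one of $ak,bk$ lies in $N$.

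Next I would introduce the two sets $K_{1}:=\{k\in K\mid ak\in N\}$ and $K_{2}:=\{k\in K\mid bk\in N\}$. A routine check shows each is a submodule of $M$ (it contains $0$ and is closed under addition and scalar multiplication, since $a(k+k')=ak+ak'$ and $a(rk)=r(ak)$), and by the previous paragraph $K=K_{1}\cup K_{2}$.

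Finally, I would invoke the standard fact that a module which equals the union of two of its submodules must coincide with one of them: if neither $K_{1}\subseteq K_{2}$ nor $K_{2}\subseteq K_{1}$, then picking $x\in K_{1}\setminus K_{2}$ and $y\in K_{2}\setminus K_{1}$ gives $x+y\in K_{1}\cup K_{2}$ lying in neither summand (for $x+y\in K_{1}$ would force $y\in K_{1}$, and $x+y\in K_{2}$ would force $x\in K_{2}$), a contradiction. Thus $K=K_{1}$ or $K=K_{2}$, which says precisely that $aK\subseteq N$ or $bK\subseteq N$.

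The only real subtlety --- and the place that genuinely uses the full hypothesis --- is the case split in the first step: the weakly classical prime property alone handles $abk\neq 0$, while it is exactly the non-triple-zero assumption that rescues the degenerate case $abk=0$. Everything after that is the formal union-of-submodules argument, which I expect to be routine.
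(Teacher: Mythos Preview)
Your proof is correct and is essentially the same argument as the paper's, just repackaged. The paper argues by contradiction directly: it chooses $k_1$ with $ak_1\notin N$ and $k_2$ with $bk_2\notin N$, uses the same case split you use (weakly classical prime for $abk_i\neq 0$, non-triple-zero for $abk_i=0$) to get $bk_1\in N$ and $ak_2\in N$, and then shows $k_1+k_2$ leads to a contradiction. Your version isolates the first step cleanly (every $k\in K$ lies in $K_1$ or $K_2$) and then quotes the union-of-two-submodules fact, whose proof you supply and which is exactly the paper's $k_1+k_2$ trick with $x=k_1$, $y=k_2$. The abstraction is mildly cleaner but there is no substantive difference in strategy.
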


\begin{proof}
Suppose that $(a,b,k)$ is not a classical triple-zero of $N$ for every $k\in K$. 
Assume on the contrary that $aK\not\subseteq N$ and $bK\not\subseteq N$. Then there are 
$k_{1},k_{2}\in K$ such that $ak_{1}\not\in N$ and $bk_{2}\not\in N$. 
If $abk_{1}\neq0$, then we have $bk_{1}\in N$, because 
$ak_{1}\not\in N$ and $N$ is a weakly classical prime submodule
of $M$. If $abk_{1}=0,$ then since $ak_{1}\notin N$ and $(a,b,k_{1})$ 
is not a classical triple-zero of $N$, we conclude again that $bk_{1}\in N$. By a
similar argument, since $(a,b,k_2)$ is not a classical triple-zero and $bk_2\notin N$, then we deduce that
$ak_{2}\in N$. From our hypothesis, $ab(k_{1}+k_{2})\in N$
and $(a,b,k_{1}+k_{2})$ is not a classical triple-zero of $N$.
Hence we have either $a(k_{1}+k_{2})\in N$ or $b(k_{1}+k_{2})\in N$. 
If $a(k_{1}+k_{2})=ak_{1}+ak_{2}\in N$, then since $%
ak_{2}\in N$, we have $ak_{1}\in N$, a contradiction. If 
$b(k_{1}+k_{2})=bk_{1}+bk_{2}\in N$, then since $bk_{1}\in N$,
we have $bk_{2}\in N$, which again is a contradiction. Thus 
$aK\subseteq N$ or $bK\subseteq N$.
\end{proof}

\begin{definition}
Let $N$ be a weakly classical prime submodule of an $R$-module $M$ and suppose that 
$IJK\subseteq N$ for some ideals $I,$ $J$ of $R$ and some submodule $K$ of 
$M$. We say that $N$ is a \textit{free }\textit{classical triple-zero with respect to 
}$IJK$ if $(a,b,k)$ is not a classical triple-zero of $N$ for every $a\in
I,b\in J$, and $k\in K$.
\end{definition}

\begin{remark}
Let $N$ be a weakly classical prime submodule of $M$ and suppose that $%
IJK\subseteq N$ for some ideals $I,J$ of $R$ and some submodule $K$ of $M$
such that $N$ is a free classical triple-zero with respect to $IJK$. Hence, if $%
a\in I,b\in J$, and $k\in K$, then $ak\in N$ or $bk\in N$.
\end{remark}

\begin{corollary}
\label{main} Let $N$ be a weakly classical prime submodule of an $R$-module $M$ and
suppose that $IJK\subseteq N$ for some ideals $%
I,J$ of $R$ and some submodule $K$ of $M$. If $N$ is a free classical 
triple-zero with respect to $IJK$, then $IK\subseteq N$ or $JK\subseteq N$.
\end{corollary}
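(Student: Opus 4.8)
The plan is to deduce this corollary from Theorem \ref{le1} by first fixing single elements of $I$ and $J$, and then passing to the full ideals by a short avoidance argument. The key point is that the single-element conclusion ``$aK\subseteq N$ or $bK\subseteq N$'' is exactly what Theorem \ref{le1} delivers, and the free classical triple-zero hypothesis is precisely what licenses its application for \emph{every} choice of $a$ and $b$.

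First I would fix arbitrary $a\in I$ and $b\in J$. Since $ab\in IJ$, every element $abk$ with $k\in K$ lies in $IJK$, so $abK\subseteq IJK\subseteq N$. Because $N$ is a free classical triple-zero with respect to $IJK$, the triple $(a,b,k)$ is not a classical triple-zero of $N$ for any $k\in K$; in particular this holds for the fixed pair $a,b$ and all $k\in K$. Hence Theorem \ref{le1} applies and yields $aK\subseteq N$ or $bK\subseteq N$. Thus for \emph{every} pair $(a,b)\in I\times J$, at least one of $aK\subseteq N$, $bK\subseteq N$ holds.

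Next I would argue by contradiction. Suppose $IK\not\subseteq N$ and $JK\not\subseteq N$. Then there exist $a\in I$ with $aK\not\subseteq N$ and $b\in J$ with $bK\not\subseteq N$. Applying the conclusion of the previous paragraph to this particular pair $(a,b)$ forces $aK\subseteq N$ or $bK\subseteq N$, contradicting the choice of $a$ and $b$. Therefore $IK\subseteq N$ or $JK\subseteq N$, as claimed.

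I do not expect a genuine obstacle here. Once Theorem \ref{le1} is in hand, the only thing to check is the containment $abK\subseteq N$ for fixed $a\in I$, $b\in J$, which is immediate from $abK\subseteq IJK\subseteq N$, and the passage from the pointwise statement to the ideals is a one-line argument. The conceptual content lives entirely in Theorem \ref{le1}; this corollary merely quantifies that statement uniformly over the elements of $I$ and $J$.
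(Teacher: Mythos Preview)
Your argument is correct and matches the paper's own proof essentially step for step: both argue by contradiction, pick $a\in I$ and $b\in J$ with $aK\not\subseteq N$ and $bK\not\subseteq N$, note $abK\subseteq IJK\subseteq N$, and invoke Theorem \ref{le1} under the free classical triple-zero hypothesis to obtain the contradiction. The only cosmetic difference is that you first state the pointwise conclusion for all pairs $(a,b)$ before specializing, whereas the paper goes straight to the specific pair.
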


\begin{proof}
Suppose that $N$ is a free classical triple-zero with respect to $IJK$. 
Assume that $IK\not\subseteq N$ and 
$JK\not\subseteq N$. Then there are $a\in I$ and $b\in J$
with $aK\not\subseteq N$ and $bK\not\subseteq N$.
Since $abK\subseteq N$ and $N$ is free classical 
triple-zero with respect to $IJK$, then Theorem \ref{le1} implies that $aK\subseteq N$ and $bK\subseteq N$
which is a contradiction. Consequently $IK\subseteq N$ or $JK\subseteq N$.
\end{proof}

Let $M$ be an $R$-module and $N$ a submodule of $M$. For every $a\in R$, $\{m\in M\mid am\in N\}$ is denoted by $(N:_M a)$. It is easy to see that $(N:_Ma)$ is a submodule of $M$ containing
$N$.

In the next theorem we characterize weakly classical prime submodules.
\begin{theorem}\label{main}
Let $M$ be an $R$-module and $N$ be a proper submodule of $M$.
The following conditions are equivalent:
\begin{enumerate}
\item $N$ is weakly classical prime;

\item For every $a,b\in R$, $(N:_Mab)=(0:_Mab)\cup(N:_Ma)\cup(N:_Mb)$;

\item For every $a\in R$ and $m\in M$ with $am\notin N$, $(N:_Ram)=(0:_Ram)\cup(N:_Rm)$;

\item For every $a\in R$ and $m\in M$ with $am\notin N$, $(N:_Ram)=(0:_Ram)$
or $(N:_Ram)=(N:_Rm)$;

\item For every $a\in R$ and every ideal $I$ of $R$ and $m\in M$ with $0\neq aIm\subseteq N$, 
either $am\in N$ or $Im\subseteq N$;

\item For every ideal $I$ of $R$ and $m\in M$ with $Im\nsubseteq N$, 
$(N:_RIm)=(0:_RIm)$ or $(N:_RIm)=(N:_Rm)$;

\item For every ideals $I,~J$ of $R$ and $m\in M$ with $0\neq IJm\subseteq N$, either
$Im\subseteq N$ or $Jm\subseteq N$.
\end{enumerate}
\end{theorem}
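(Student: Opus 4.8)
The plan is to organize the seven conditions into the ``element-level'' statements (1)--(4) and the ``ideal-level'' statements (5)--(7): I would dispatch the equivalences among the former by direct translation of the definition, and then close the cycle $(1)\Rightarrow(5)\Rightarrow(6)\Rightarrow(7)\Rightarrow(1)$ through the latter. Throughout, the single structural fact I would lean on is that $N$ is a submodule, so $x\in N$ forces $Rx\subseteq N$; this is what lets principal ideals recover the element version.

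For $(1)\Leftrightarrow(2)$ I would note that $(0:_{M}ab)\cup(N:_{M}a)\cup(N:_{M}b)\subseteq(N:_{M}ab)$ always holds (each piece sits inside the colon set because $N$ is a submodule), while the reverse inclusion is precisely the defining dichotomy applied to $x\in(N:_{M}ab)$, splitting on whether $abx=0$ or $abx\neq0$; reading the resulting set equality backwards recovers (1) from (2). The same bookkeeping, now with the free variable taken to be a ring element rather than a module element, gives $(1)\Leftrightarrow(3)$, and $(3)\Leftrightarrow(4)$ is the classical fact that an additive group---here the ideal $(N:_{R}am)$---cannot be the union of two proper subgroups, so $(0:_{R}am)\cup(N:_{R}m)$ must already coincide with one of its two members. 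None of these steps is delicate.

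The heart of the matter is $(1)\Rightarrow(5)$, where the hypothesis $0\neq aIm\subseteq N$ must be made to rule out classical triple-zeros, and this is the step I expect to be the main obstacle. Assuming $am\notin N$, I would first use $aIm\neq0$ to fix $c_{0}\in I$ with $ac_{0}m\neq0$; then $0\neq ac_{0}m\in N$ forces $c_{0}m\in N$. For an arbitrary $c\in I$ with $acm\neq0$ the same argument yields $cm\in N$, while for $c$ with $acm=0$ I would absorb the would-be triple-zero into the witness by passing to $c+c_{0}$: since $a(c+c_{0})m=ac_{0}m\neq0$ lies in $N$, we get $(c+c_{0})m\in N$ and hence $cm=(c+c_{0})m-c_{0}m\in N$. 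Thus $Im\subseteq N$. This perturbation is the only place where the ``nonzero product'' hypothesis genuinely works, so it is the crux.

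The remaining implications are comparatively routine. For $(5)\Rightarrow(6)$, given $Im\not\subseteq N$ I would prove $(N:_{R}Im)=(0:_{R}Im)\cup(N:_{R}m)$: the inclusions $\supseteq$ hold because $N$ is a submodule, and for $r\in(N:_{R}Im)$ I split on $rIm=0$ against $rIm\neq0$, applying (5) to the element $r$ and the ideal $I$ in the latter case, then invoke the subgroup-union fact once more. For $(6)\Rightarrow(7)$, from $0\neq IJm\subseteq N$ with $Im\not\subseteq N$ one checks $J\subseteq(N:_{R}Im)$ while $J\not\subseteq(0:_{R}Im)$ (the latter exactly because $IJm\neq0$), so (6) forces $(N:_{R}Im)=(N:_{R}m)$ and hence $Jm\subseteq N$. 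Finally $(7)\Rightarrow(1)$ closes the cycle: given $0\neq abm\in N$, the submodule property already gives $R(abm)\subseteq N$, so taking $I=Ra$ and $J=Rb$ makes $0\neq IJm=R(abm)\subseteq N$ automatic, and (7) returns $am\in N$ or $bm\in N$. Combined with $(1)\Leftrightarrow(2)$ and $(1)\Leftrightarrow(3)\Leftrightarrow(4)$, this yields the equivalence of all seven conditions.
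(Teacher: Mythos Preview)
Your proof is correct. The chief difference from the paper is organizational: the paper runs a single cycle $(1)\Rightarrow(2)\Rightarrow(3)\Rightarrow(4)\Rightarrow(5)\Rightarrow(6)\Rightarrow(7)\Rightarrow(1)$, whereas you prove $(1)\Leftrightarrow(2)$ and $(1)\Leftrightarrow(3)\Leftrightarrow(4)$ separately and then close $(1)\Rightarrow(5)\Rightarrow(6)\Rightarrow(7)\Rightarrow(1)$. The substantive contrast is at the step you call ``the heart of the matter'': you prove $(1)\Rightarrow(5)$ directly by a perturbation trick (shifting a bad $c$ to $c+c_{0}$ to dodge a triple-zero), while the paper routes through $(4)\Rightarrow(5)$, where the argument is a one-liner: $I\subseteq(N:_{R}am)$ and $I\not\subseteq(0:_{R}am)$, so the dichotomy in (4) forces $(N:_{R}am)=(N:_{R}m)$. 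In effect, your perturbation is an inline special case of the ``an ideal equal to a union of two ideals equals one of them'' fact that the paper has already isolated at $(3)\Rightarrow(4)$; the paper's arrangement thus buys a shorter path to (5), while your version has the advantage of making $(1)\Rightarrow(5)$ self-contained.
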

\begin{proof}
(1)$\Rightarrow$(2) Suppose that $N$ is a weakly classical prime submodule of $M$. Let $m\in
( N:_Mab)$. Then $abm\in N$. If $abm=0$, then $m\in( 0:_Mab)$. Assume that $abm\neq0$. 
Hence $am\in N$ or $bm\in N$. Therefore $m\in\left( N:_Ma\right)$ or $m\in\left( N:_Mb\right)$. 
Consequently, $\left(N:_Mab\right)=(0:_Mab)\cup\left(N:_Ma\right)\cup\left( N:_Mb\right)$.\newline
(2)$\Rightarrow$(3) 
Let $am\notin N$ for some $a\in R$ and $m\in M$. Assume that $x\in(N:_Ram)$.
Then $axm\in N$, and so $m\in(N:_Max)$. Since $am\notin N$, then $m\notin(N:_Ma)$.
Thus by part (2), $m\in(0:_Max)$ or $m\in(N:_Mx)$, whence $x\in(0:_Ram)$
or $x\in(N:_Rm)$. Therefore $(N:_Ram)=(0:_Ram)\cup(N:_Rm)$.\newline
(3)$\Rightarrow$(4) By the fact that if an ideal (a subgroup) is the union
of two ideals (two subgroups), then it is equal to one of them.\newline
(4)$\Rightarrow$(5) Let for some $a\in R$, an ideal $I$ of $R$ and $m\in M$, $0\neq aIm\subseteq N$.
Hence $I\subseteq(N:_Ram)$ and $I\nsubseteq(0:_Ram)$.
If $am\in N$, then we are done. So, assume that $am\notin N$.
Therefore by part (4) we have that  $I\subseteq(N:_Rm)$, i.e., $Im\subseteq N$.\newline
(5)$\Rightarrow$(6)$\Rightarrow$(7) Have proofs similar to that of the previous implications.\newline
(7)$\Rightarrow$(1) Is trivial.
\end{proof}
%%%%%%%%%%%%%%

\begin{theorem}\label{T1} 
Let $N$ be a weakly classical prime submodule of $M$ and suppose that $(a,b,m)$ 
is a classical triple-zero of $N$ for some $a,b\in R,$ $m\in M$. Then

\begin{enumerate}
\item $abN=0.$

\item $a(N:_RM)m=0.$

\item $b(N:_RM)m=0.$

\item $(N:_RM)^2m=0.$

\item $a(N:_RM)N=0.$

\item $b(N:_RM)N=0.$
\end{enumerate}
\end{theorem}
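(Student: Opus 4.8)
The plan is to treat all six identities by a single mechanism: assuming the product in question is nonzero for some witnessing elements, I build a perturbed triple whose double-product still lies in $N$ and is nonzero, apply the weakly classical prime hypothesis to that triple, and in either resulting case peel off the perturbation to force $am\in N$ or $bm\in N$, contradicting that $(a,b,m)$ is a classical triple-zero. The two standing facts I will use repeatedly are that $N$ is a submodule (so $an\in N$ whenever $n\in N$) and that $r\in(N:_RM)$ gives $rM\subseteq N$, in particular $rm'\in N$ for every $m'\in M$.

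For (1), suppose $abn\neq0$ for some $n\in N$. Then $ab(m+n)=abm+abn=abn$ is a nonzero element of $N$, so $a(m+n)\in N$ or $b(m+n)\in N$; since $an,bn\in N$, either case yields $am\in N$ or $bm\in N$, a contradiction. For (2), suppose $arm\neq0$ for some $r\in(N:_RM)$. Then $a(b+r)m=abm+arm=arm$ lies in $N$ and is nonzero, so $am\in N$ or $(b+r)m\in N$; the first is excluded, and the second gives $bm=(b+r)m-rm\in N$ since $rm\in N$, again a contradiction. Part (3) is the mirror image of (2), perturbing $a$ instead of $b$.

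For (4), given $r,s\in(N:_RM)$ I consider $(a+r)(b+s)m$. Expanding gives $abm+asm+rbm+rsm$; here $abm=0$, while $asm=0$ and $rbm=0$ by parts (2) and (3), so the product collapses to $rsm$, which also lies in $N$ because $sm\in N$. If $rsm\neq0$, the weakly classical prime property applied to $a+r$, $b+s$, $m$ gives $(a+r)m\in N$ or $(b+s)m\in N$, and subtracting $rm\in N$ or $sm\in N$ recovers $am\in N$ or $bm\in N$, a contradiction; hence $(N:_RM)^2m=0$. Parts (5) and (6) follow the same pattern one level up: for (5), given $r\in(N:_RM)$ and $n\in N$, I examine $a(b+r)(m+n)$, whose expansion $abm+abn+arm+arn$ reduces via (1) and (2) to $arn\in N$; if $arn\neq0$, the hypothesis forces $a(m+n)\in N$ or $(b+r)(m+n)\in N$, and in each case discarding the terms already lying in $N$ (namely $an$, respectively $bn,rm,rn$) returns $am\in N$ or $bm\in N$. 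Part (6) is symmetric.

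The only real subtlety, and the step I would check most carefully, is the bookkeeping that makes each perturbed product both lie in $N$ and equal a single ``target'' term: this is exactly where the earlier parts are needed to kill the cross terms, so I would prove the six items strictly in the order given, using (1)--(3) to annihilate the cross terms arising in (4)--(6). Everything else is routine module arithmetic; no maximality, localization, or $um$-ring hypothesis enters, only the defining property of a weakly classical prime submodule together with the assumptions $am\notin N$ and $bm\notin N$.
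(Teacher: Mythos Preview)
Your proof is correct and follows essentially the same approach as the paper's: the same perturbations $m\mapsto m+n$, $b\mapsto b+r$, $a\mapsto a+r$ are used, the cross terms are killed using the earlier parts in exactly the same way, and the items are proved in the same order for precisely the reason you identify. The only cosmetic difference is notation.
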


\begin{proof}
(1) Suppose that $abN\neq 0$. Then there exists $n\in N$
with $abn\neq0$. Hence $0\neq ab(m+n)=abn\in N$, so we conclude that $a(m+n)\in
N $ or $b(m+n)\in N$. Thus $am\in N$ or $bm\in N$, which contradicts the assumption that $(a,b,m)$ is classical triple-zero.
Thus $abN=0.$

(2) Let $axm\neq0$ for some $x\in (N:_{R}M)$. Then $a(b+x)m\neq0$, because $abm=0$. 
Since $xm\in N$, $a(b+x)m\in N$. Then $am\in N$ or $(b+x)m\in N$. Hence 
$am\in N$ or $bm\in N$, which contradicts our hypothesis. 

(3) The proof is similar to part (2).

(4) Assume that $x_{1}x_{2}m\neq0$ for some $x_{1},x_{2}\in
(N:_{R}M)$. Then by parts (2) and (3), $(a+x_{1})(b+x_{2})m=x_1x_2m\neq0$.
Clearly $(a+x_{1})(b+x_{2})m\in N$. Then $(a+x_{1})m\in N$ or $(b+x_{2})m\in
N$. Therefore $am\in N$ or $bm\in N$ which is a contradiction. Consequently $(N:_{R}M)^{2}m=0.$

(5) Let $axn\neq0$ for some $x\in(N:_RM)$ and $n\in N$. Therefore by parts (1) and (2) we conclude that
$0\neq a(b+x)(m+n)=axn\in N$. So $a(m+n)\in N$ or $(b+x)(m+n)\in N$. Hence
$am\in N$ or $bm\in N$. This contradiction shows that $a(N:_RM)N=0.$

(6) Similart to part (5).
\end{proof}

A submodule $N$ of an $R$-module $M$ is called a nilpotent submodule if $(N
:_RM)^kN=0$ for some positive integer $k$ (see \cite{Ali}), and we say that $%
m\in M$ is nilpotent if $Rm$ is a nilpotent submodule of $M$.

\begin{theorem}\label{T2} 
If $N$ is a weakly classical prime submodule of an $R$-module $M$ that is
not classical prime, then $(N:_{R}M)^{2}N=0$ and so $N$ is nilpotent.
\end{theorem}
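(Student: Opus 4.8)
The plan is to produce a classical triple-zero and then amplify it. Since $N$ is weakly classical prime but not classical prime, there exist $a,b\in R$ and $m\in M$ with $abm\in N$, $am\notin N$ and $bm\notin N$. Were $abm\neq 0$, the weakly classical prime hypothesis would force $am\in N$ or $bm\in N$; hence $abm=0$, so $(a,b,m)$ is a classical triple-zero of $N$. This makes all six conclusions of Theorem \ref{T1} available, and they are the engine of the argument.

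Write $P=(N:_RM)$ for brevity. To show $P^2N=0$ I argue by contradiction: if $P^2N\neq 0$ there are $x_1,x_2\in P$ and $n\in N$ with $x_1x_2n\neq 0$. The key computation is to expand $(a+x_1)(b+x_2)(m+n)$ and check that every term but one vanishes: $abm=0$ (triple-zero) and $abn=0$ by Theorem \ref{T1}(1); $ax_2m=0$ by Theorem \ref{T1}(2) and $ax_2n=0$ by Theorem \ref{T1}(5); $x_1bm=0$ by Theorem \ref{T1}(3) and $x_1bn=0$ by Theorem \ref{T1}(6); and $x_1x_2m=0$ by Theorem \ref{T1}(4). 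Thus $(a+x_1)(b+x_2)(m+n)=x_1x_2n\neq 0$. Since $x_1,x_2\in P=(N:_RM)$ we also have $x_1x_2n\in N$, so $0\neq (a+x_1)(b+x_2)(m+n)\in N$.

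Now I invoke that $N$ is weakly classical prime: either $(a+x_1)(m+n)\in N$ or $(b+x_2)(m+n)\in N$. In the first case, $x_1m\in N$ and $x_1n\in N$ because $x_1\in (N:_RM)$, while $an\in N$ because $N$ is a submodule; subtracting these three elements from $(a+x_1)(m+n)=am+an+x_1m+x_1n\in N$ leaves $am\in N$, contradicting the triple-zero. The second case is symmetric and yields $bm\in N$, again a contradiction. Hence $P^2N=(N:_RM)^2N=0$, and taking $k=2$ in the definition this exhibits $N$ as a nilpotent submodule.

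The only delicate point is the bookkeeping in the expansion: one must match each of the eight terms of $(a+x_1)(b+x_2)(m+n)$ to the correct clause of Theorem \ref{T1}, and use that $N$ is closed under the $R$-action for the terms $abn$, $an$ and $bn$, so that precisely the surviving term is the nonzero element $x_1x_2n$. Everything else is routine.
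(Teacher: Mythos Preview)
Your proof is correct and follows essentially the same route as the paper: obtain a classical triple-zero $(a,b,m)$, assume $(N:_RM)^2N\neq 0$, pick $x_1,x_2\in(N:_RM)$ and $n\in N$ with $x_1x_2n\neq 0$, and use Theorem~\ref{T1} to collapse $(a+x_1)(b+x_2)(m+n)$ to $x_1x_2n$, then derive the contradiction from the weakly classical prime condition. Your version is in fact more explicit than the paper's, which simply writes ``By Theorem~\ref{T1}'' without itemizing which clause handles which of the eight terms.
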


\begin{proof}
Suppose that $N$ is a weakly classical prime submodule of $M$ that is
not classical prime. Then there exists a classical triple-zero $%
(a,b,m)$ of $N$ for some $a,b\in R,$ $m\in M$. Assume that $%
(N:_{R}M)^{2}N\neq0$. Hence there are $x_{1},x_{2}\in
(N:_{R}M) $ and $n\in N$ such that $x_{1}x_{2}n\neq0$. By Theorem %
\ref{T1}, $0\neq (a+x_{1})(b+x_{2})(m+n)=x_{1}x_{2}n\in N$. So $%
(a+x_{1})(m+n)\in N$ or $(b+x_{1})(m+n)\in N$. Therefore $am\in N$ 
or $bm\in N$, a contradiction.
\end{proof}

\begin{remark}\label{multi}
Let $M$ be a multiplication $R$-module and $K,L$ be submodules of $M$.
Then there are ideals $I,J$ of $R$ such that $K=IM$ and $L=JM$. Thus
$KL=IJM=IL$. In particular $KM=IM=K$. Also, for any $m\in M$ we define
$Km:=KRm$. Hence $Km=IRm=Im$. 
\end{remark}
\begin{corollary}
If $N$ is a weakly classical prime submodule of a multiplication $R$%
-module $M$ that is not classical prime, then $N^{3}=0$.
\end{corollary}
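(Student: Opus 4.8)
The plan is to reduce $N^{3}$ to the expression $(N:_{R}M)^{2}N$, which Theorem~\ref{T2} already forces to vanish. Since $M$ is a multiplication module, the introduction gives $N=(N:_{R}M)M$; I would abbreviate $I:=(N:_{R}M)$, so that $N=IM$. The strategy is then to compute the threefold product $N^{3}=N\cdot N\cdot N$ by applying Remark~\ref{multi} twice, using that for submodules presented as $K=IM$ and $L=JM$ one has $KL=IJM=IL$.

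First I would compute $N^{2}$. Taking $K=L=N=IM$ in Remark~\ref{multi} gives $N^{2}=N\cdot N=IN=I^{2}M$, so $N^{2}$ is presented by the ideal $I^{2}$. Next I would compute $N^{3}=N^{2}\cdot N$: taking $K=N^{2}=I^{2}M$ and $L=N$ in the same remark yields $N^{3}=I^{2}N=(N:_{R}M)^{2}N$. Finally, since $N$ is weakly classical prime but not classical prime, Theorem~\ref{T2} gives $(N:_{R}M)^{2}N=0$, and hence $N^{3}=0$.

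The only delicate point is the bookkeeping with the product of submodules in a multiplication module: that product is defined through a presentation $K=IM$, so one must check that the identity $N^{3}=(N:_{R}M)^{2}N$ is independent of the presentations chosen along the way. This is precisely what \cite[Theorem 3.4]{Am}, recalled in the introduction, guarantees, so no genuine obstacle remains; the argument is just a two-step application of Remark~\ref{multi} followed by the appeal to Theorem~\ref{T2}.
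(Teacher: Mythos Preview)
Your proposal is correct and follows essentially the same approach as the paper: the paper's proof also notes that $N=(N:_{R}M)M$ since $M$ is multiplication, and then invokes Remark~\ref{multi} and Theorem~\ref{T2} to conclude $N^{3}=(N:_{R}M)^{2}N=0$. Your version simply spells out the two applications of Remark~\ref{multi} more explicitly.
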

\begin{proof}
Since $M$ is multiplication, then $N=(N:_RM)M$. Therefore by Theorem \ref{T2} and Remark \ref{multi}, $N^3=(N:_RM)^2N=0$.
\end{proof}

Assume that $\mathrm{Nil}(M)$ is the set of nilpotent elements of $M$. If $M$
is faithful, then $\mathrm{Nil}(M)$ is a submodule of $M$ and if $M$ is
faithful multiplication, then $\mathrm{Nil}(M)=\mathrm{Nil}(R)M=\bigcap Q$ ($=M$-$\rm rad(\{0\})$),
where the intersection runs over all prime submodules of $M$, \textrm{\cite[%
Theorem 6]{Ali}}. 
\begin{theorem}
\label{nil} Let $N$ be a weakly classical prime submodule of $M$. If $N$
is not classical prime, then

\begin{enumerate}
\item $\sqrt{(N:_RM)}=\sqrt{Ann_R(M)}$.

\item If $M$ is multiplication, then $M$-$\mathrm{rad}(N)$=$M$-$\mathrm{rad}(\{0\})$.
If in addition $M$ is faithful, then $M$-$\mathrm{rad}(N)=\mathrm{Nil}(M)$.
\end{enumerate}
\end{theorem}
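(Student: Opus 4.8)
The plan is to prove part (1) by establishing the two inclusions of $\sqrt{(N:_RM)}=\sqrt{{\rm Ann}_R(M)}$ separately, and then to deduce part (2) formally from part (1) together with the radical formula $M$-$\mathrm{rad}(N)=\sqrt{(N:_RM)}M$ valid for multiplication modules. The inclusion $\sqrt{{\rm Ann}_R(M)}\subseteq\sqrt{(N:_RM)}$ is immediate: since ${\rm Ann}_R(M)=(0:_RM)\subseteq(N:_RM)$ because $0\subseteq N$, and passing to radicals preserves inclusions, we get it for free.

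The substance lies in the reverse inclusion, and here I would feed in Theorem \ref{T2}: because $N$ is weakly classical prime but not classical prime, we have $(N:_RM)^2N=0$. So take $r\in\sqrt{(N:_RM)}$, meaning $r^kM\subseteq N$ for some positive integer $k$, i.e.\ $r^k\in(N:_RM)$. Then $r^{2k}=(r^k)^2\in(N:_RM)^2$, and Theorem \ref{T2} forces $r^{2k}N=0$. Multiplying the containment $r^kM\subseteq N$ through by $r^{2k}$ gives $r^{3k}M=r^{2k}(r^kM)\subseteq r^{2k}N=0$, so $r^{3k}\in(0:_RM)={\rm Ann}_R(M)$ and hence $r\in\sqrt{{\rm Ann}_R(M)}$. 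This yields $\sqrt{(N:_RM)}\subseteq\sqrt{{\rm Ann}_R(M)}$ and finishes part (1). I expect this to be the only real obstacle: the crux is converting mere membership in the radical of $(N:_RM)$ into genuine annihilation of $M$, and the nilpotency supplied by Theorem \ref{T2} is exactly what bridges that gap; the rest is bookkeeping.

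For part (2), assume $M$ is multiplication. Applying \cite[Theorem 2.12]{ES} to $N$ and to $\{0\}$ gives $M$-$\mathrm{rad}(N)=\sqrt{(N:_RM)}M$ and $M$-$\mathrm{rad}(\{0\})=\sqrt{(0:_RM)}M=\sqrt{{\rm Ann}_R(M)}M$. Part (1) identifies the two radical ideals, so I immediately obtain $M$-$\mathrm{rad}(N)=M$-$\mathrm{rad}(\{0\})$. If in addition $M$ is faithful, then by \cite[Theorem 6]{Ali} one has $\mathrm{Nil}(M)=M$-$\mathrm{rad}(\{0\})$, and chaining this with the equality just obtained produces $M$-$\mathrm{rad}(N)=\mathrm{Nil}(M)$, as claimed.
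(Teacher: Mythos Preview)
Your proof is correct and follows essentially the same approach as the paper: both use Theorem \ref{T2} to obtain $(N:_RM)^2N=0$ and then deduce that the cube of $(N:_RM)$ annihilates $M$, with part (2) derived formally from part (1) via the radical formula for multiplication modules. The only cosmetic difference is that the paper argues at the ideal level, showing $(N:_RM)^3=(N:_RM)^2(N:_RM)\subseteq\bigl((N:_RM)^2N:_RM\bigr)=(0:_RM)$ in one line, whereas you pick an element $r$ with $r^k\in(N:_RM)$ and verify $r^{3k}M=0$; the content is identical.
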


\begin{proof}
(1) Assume that $N$ is not classical prime. By Theorem \ref{T2}, $%
(N:_RM)^2N=0$. Then 
\begin{equation*}
(N:_RM)^3=(N:_RM)^2(N:_RM)
\end{equation*}
\begin{equation*}
\hspace{2cm}\subseteq((N:_RM)^2N:_RM)
\end{equation*}
\begin{equation*}
=(0:_RM),
\end{equation*}
and so $(N:_RM)\subseteq\sqrt{(0:_RM)}$. Hence, we have $\sqrt{(N:_RM)}=
\sqrt{(0:_RM)}=\sqrt{{\rm Ann}_R(M)}$.

(2) By part (1), $M $-$\mathrm{rad}(N)=\sqrt{(N:_{R}M)}M=\sqrt{(0:_RM)}M=M$-$\mathrm{rad}(\{0\})={\rm Nil}(M)$.
\end{proof}

\begin{corollary}
Let $R$ be a ring and $I$ be a proper ideal of $R$.
\begin{enumerate}
\item  $_{R}I$ is a weakly classical prime submodule of $_{R}R$ if and only if $I$ is a weakly prime ideal of $R$. 

\item Every proper ideal of $R$ is weakly prime if and only if for every $R$-module $M$
and every proper submodule $N$ of $M$, $N$ is a weakly classical prime submodule of $M$.
\end{enumerate} 
\end{corollary}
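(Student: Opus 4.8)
The plan is to reduce each part to results already established. For part (1), I first observe that since $R$ has an identity, $(I:_RR)=\{r\in R\mid rR\subseteq I\}=I$: the inclusion $r\in rR$ gives $(I:_RR)\subseteq I$, and $I$ being an ideal gives the reverse. Consequently the defining condition for $I$ to be a weakly prime \emph{submodule} of $_RR$ --- for $a,m\in R$ with $0\neq am\in I$, either $m\in I$ or $a\in(I:_RR)$ --- is word-for-word the condition for $I$ to be a weakly prime \emph{ideal} of $R$. Since $_RR$ is a cyclic $R$-module (generated by $1$) and $I$ is a proper submodule of it, the earlier Proposition on cyclic modules tells us that $I$ is a weakly classical prime submodule of $_RR$ if and only if it is a weakly prime submodule of $_RR$. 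Chaining the two equivalences yields part (1). Alternatively one argues directly: for the ``only if'' direction take $m=1$, so that $0\neq ab\in I$ forces $a\cdot 1\in I$ or $b\cdot 1\in I$; and for the ``if'' direction rewrite $0\neq abm$ as $0\neq a(bm)\in I$ and apply the weakly prime ideal property to conclude $a\in I$ (whence $am\in I$) or $bm\in I$.

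For the backward implication of part (2), I specialize to $M={}_RR$. A proper submodule of $_RR$ is exactly a proper ideal of $R$, so the hypothesis that every proper submodule of every $R$-module is weakly classical prime gives, in particular, that every proper ideal of $R$ is a weakly classical prime submodule of $_RR$; by part (1) each such ideal is then weakly prime.

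For the forward implication of part (2), assume every proper ideal of $R$ is weakly prime, and let $N$ be a proper submodule of an arbitrary $R$-module $M$. The key small observation is that for each $m\in M\backslash N$ the colon ideal $(N:_Rm)$ is \emph{proper}: if it were all of $R$, then $m=1\cdot m\in N$, contradicting $m\notin N$. Hence by hypothesis $(N:_Rm)$ is weakly prime for every $m\in M\backslash N$, and Theorem \ref{colon}(2) immediately yields that $N$ is weakly classical prime. I expect this to be a corollary of the earlier machinery rather than a substantive new argument; the only points requiring care are checking the properness of $(N:_Rm)$ so that ``weakly prime'' is applicable, and keeping straight the order of the variables when the weakly prime property is invoked in part (1).
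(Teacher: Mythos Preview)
Your proof is correct and, for part (2), essentially identical to the paper's: both check that $(N:_Rm)$ is proper for $m\notin N$, invoke the hypothesis to make it weakly prime, and then apply Theorem~\ref{colon}(2); the converse is obtained by specializing to $M={}_RR$ and citing part (1).

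For part (1) there is a small difference in which prior results are cited. The paper handles the two directions separately: for ``only if'' it applies Theorem~\ref{colon}(1) with $m=1$ (noting ${\rm Ann}_R(1)=0$) to get that $(I:_R1)=I$ is weakly prime; for ``if'' it observes, as you do, that a weakly prime ideal is the same as a weakly prime submodule of $_RR$, and then invokes Proposition~\ref{abs-class}(1). Your route instead appeals once to the cyclic-module Proposition (weakly prime $\Leftrightarrow$ weakly classical prime for cyclic $M$) after the identification $(I:_RR)=I$, which is slightly more economical since it gives both implications at once. Your alternative direct argument is also fine and amounts to unwinding those same lemmas.
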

\begin{proof}
(1) Let $_{R}I$ be a weakly classical prime submodule of $_{R}R$. Then by Theorem \ref{colon}(1),
$(I:_R1)=I$ is a weakly prime ideal of $R$. For the converse, notice that $_{R}I$ is a weakly prime submodule of $_{R}R$ if and only if $I$ is a weakly prime ideal of $R$. Now, apply part (1) of Proposition \ref{abs-class}.

(2) Assume that every proper ideal of $R$ is weakly prime.
Let $N$ be a proper submodule of an $R$-module $M$.
Since for every $m\in M\backslash N$, $(N:_Rm)$ is a proper ideal of $R$, then it is a weakly prime ideal of $R$. 
Hence by Theorem \ref{colon}(2), $N$ is a weakly classical prime submodule of $M$. We have the converse immediately 
by part (1).
\end{proof}

Regarding Remark \ref{multi} we have the next proposition.
\begin{proposition}
Let  $M$ be a multiplication $R$-module and $N$ be a proper submodule of $M$.
The following conditions are equivalent:

\begin{enumerate}
\item $N$ is a weakly classical prime submodule of $M$;

\item If $0\neq N_1N_2m\subseteq N$ for some submodules $N_1,N_2$ of $
M$ and $m\in M$, then either $N_1m\subseteq N$ or $N_2m\subseteq N$.
\end{enumerate}
\end{proposition}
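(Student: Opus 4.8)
The plan is to reduce both implications to the characterization of weakly classical prime submodules given by condition (7) of Theorem \ref{main}, using the translation between submodules and ideals supplied by Remark \ref{multi}. Since $M$ is a multiplication module, any submodules $N_1,N_2$ have the form $N_1=I_1M$ and $N_2=I_2M$ for ideals $I_1,I_2$ of $R$. Remark \ref{multi} then gives $N_1N_2=I_1I_2M$, and, viewing $N_1N_2$ as the submodule $(I_1I_2)M$, the same remark yields $N_1N_2\,m=(I_1I_2)m=I_1I_2m$ as well as $N_im=I_im$ for $i=1,2$. This dictionary turns statement (2) into statement (7) almost verbatim, so the work is to record the two directions carefully.

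For (1)$\Rightarrow$(2): I assume $N$ is weakly classical prime and suppose $0\neq N_1N_2m\subseteq N$ for submodules $N_1=I_1M$, $N_2=I_2M$ and $m\in M$. By Remark \ref{multi} this reads $0\neq I_1I_2m\subseteq N$, so the equivalence (1)$\Leftrightarrow$(7) of Theorem \ref{main} gives $I_1m\subseteq N$ or $I_2m\subseteq N$. Re-reading these via $I_im=N_im$ produces $N_1m\subseteq N$ or $N_2m\subseteq N$, as required. For (2)$\Rightarrow$(1): by Theorem \ref{main} it suffices to verify condition (7). Given ideals $I,J$ of $R$ and $m\in M$ with $0\neq IJm\subseteq N$, I set $N_1=IM$ and $N_2=JM$; then $N_1N_2m=IJm$, so $0\neq N_1N_2m\subseteq N$ and hypothesis (2) yields $N_1m\subseteq N$ or $N_2m\subseteq N$, that is, $Im\subseteq N$ or $Jm\subseteq N$. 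Hence (7) holds and $N$ is weakly classical prime.

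The only point requiring care is the bookkeeping with products: one must confirm that $N_1N_2m$ really equals $I_1I_2m$ (as submodules of $M$), that $N_im=I_im$, and that the nonvanishing hypotheses $0\neq N_1N_2m$ and $0\neq I_1I_2m$ correspond to each other under this identification. All three facts are immediate from Remark \ref{multi} (together with the presentation-independence of the product of submodules recalled in the introduction), so beyond invoking that remark and Theorem \ref{main} there is no genuine obstacle; the proposition is essentially a restatement of condition (7) in the language of submodule products available on a multiplication module.
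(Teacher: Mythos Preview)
Your proof is correct and follows essentially the same approach as the paper: both directions reduce to condition (7) of Theorem \ref{main} via the dictionary $N_i=I_iM$, $N_im=I_im$, $N_1N_2m=I_1I_2m$ supplied by Remark \ref{multi}. The paper's argument is slightly more terse (it does not explicitly cite Theorem \ref{main}, relying on the preceding remark), but the content is identical.
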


\begin{proof}
(1)$\Rightarrow$(2) Let $0\neq N_1N_2m\subseteq N$ for some submodules $N_1,N_2$
of $M$ and $m\in M$. Since $M$ is multiplication, there are ideals $I_1,I_2$ of $R$ such that $N_1=I_1M$
and $N_2=I_2M$. Therefore $0\neq N_1N_2m= I_1I_2m\subseteq N$, and 
so either $I_1m\subseteq N$ or $I_2m\subseteq N$.
Hence $N_1m\subseteq N$ or $N_2m\subseteq N$.\newline
(2)$\Rightarrow$(1) Suppose that $0\neq I_{1}I_{2}m\subseteq N$ for some ideals $%
I_{1},I_{2}$ of $R$ and some $m\in M$. It is sufficient to set $%
N_1:=I_1M$ and $N_2:=I_2M$ in part (2).
\end{proof}

\begin{theorem}\label{main2}
Let $R$ be a $um$-ring, $M$ be an $R$-module and $N$ be a proper submodule of $M$.
The following conditions are equivalent:
\begin{enumerate}
\item $N$ is weakly classical prime;

\item For every $a,b\in R$, $(N:_Mab)=(0:_Mab)$ or $(N:_Mab)=(N:_Ma)$
or $(N:_Mab)=(N:_Mb)$;

\item For every $a,b\in R$ and every submodule $L$ of $M$, $0\neq abL\subseteq N$
implies that $aL\subseteq N$ or $bL\subseteq N$;

\item For every $a\in R$ and every submodule $L$ of $M$ with $aL\nsubseteq N$, $(N:_RaL)=(0:_RaL)$
or $(N:_RaL)=(N:_RL)$;

\item For every $a\in R$, every ideal $I$ of $R$ and every submodule $L$ of $M$, $0\neq aIL\subseteq N$
implies that $aL\subseteq N$ or $IL\subseteq N$;

\item For every ideal $I$ of $R$ and every submodule $L$ of $M$ with $IL\nsubseteq N$, $(N:_RIL)=(0:_RIL)$
or $(N:_RIL)=(N:_RL)$;

\item For every ideals $I,~J$ of $R$ and every submodule $L$ of $M$, $0\neq IJL\subseteq N$
implies that $IL\subseteq N$ or $JL\subseteq N$.
\end{enumerate}
\end{theorem}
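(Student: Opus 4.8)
The plan is to prove the equivalence by running the cyclic chain $(1)\Rightarrow(2)\Rightarrow(3)\Rightarrow(4)\Rightarrow(5)\Rightarrow(6)\Rightarrow(7)\Rightarrow(1)$, which mirrors the structure of Theorem \ref{main} but now carries arbitrary submodules $L$ in place of single elements $m$. The essential new ingredient, and the only place where the hypothesis that $R$ is a $um$-ring is genuinely used, is the opening implication $(1)\Rightarrow(2)$; everything after it is a formal translation between inclusions of products and containments of colon ideals.

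For $(1)\Rightarrow(2)$ I would invoke the equivalence $(1)\Leftrightarrow(2)$ of Theorem \ref{main}, which gives, for all $a,b\in R$,
\[
(N:_Mab)=(0:_Mab)\cup(N:_Ma)\cup(N:_Mb).
\]
Each set on the right is a submodule of $M$ contained in $(N:_Mab)$, so $(N:_Mab)$ is an $R$-module equal to the union of three of its own submodules. Since $R$ is a $um$-ring, such a module must coincide with one of them, which is exactly the disjunction asserted in $(2)$.

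The remaining steps need no finiteness hypothesis beyond the elementary fact, already used in Theorem \ref{main}, that an ideal which is a union of two ideals equals one of them. For $(2)\Rightarrow(3)$, from $0\neq abL\subseteq N$ one has $L\subseteq(N:_Mab)$; the alternative $(N:_Mab)=(0:_Mab)$ would force $abL=0$, a contradiction, while the other two alternatives yield $aL\subseteq N$ or $bL\subseteq N$. For $(3)\Rightarrow(4)$ and $(5)\Rightarrow(6)$ I would exhibit the relevant colon ideal as a union of two ideals: e.g.\ for $(3)\Rightarrow(4)$, assuming $aL\nsubseteq N$, any $b\in(N:_RaL)$ satisfies $abL\subseteq N$, hence $b\in(0:_RaL)$ if $abL=0$ and $b\in(N:_RL)$ otherwise by $(3)$, so $(N:_RaL)=(0:_RaL)\cup(N:_RL)$ and equality with one summand follows. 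The implications $(4)\Rightarrow(5)$ and $(6)\Rightarrow(7)$ go the other way: from $0\neq aIL\subseteq N$ (resp.\ $0\neq IJL\subseteq N$) one reads off $I\subseteq(N:_RaL)$ (resp.\ $J\subseteq(N:_RIL)$), and after assuming $aL\nsubseteq N$ (resp.\ $IL\nsubseteq N$) the zero-colon alternative in $(4)$ (resp.\ $(6)$) is impossible because the product is nonzero, leaving $IL\subseteq N$ (resp.\ $JL\subseteq N$). Finally $(7)\Rightarrow(1)$ is immediate on taking $I=Ra$, $J=Rb$, $L=Rm$, so that $IJL=Rabm$.

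I expect the main obstacle to be conceptual rather than computational: the jump from the union characterization of Theorem \ref{main} to the genuine ``equals one of'' disjunction of $(2)$ is precisely what the $um$-ring property supplies, and one must verify that the three sets whose union is taken are submodules of the ambient module $(N:_Mab)$ so that the property legitimately applies. Once $(1)\Rightarrow(2)$ is secured, the rest of the cycle is routine bookkeeping.
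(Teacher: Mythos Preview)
Your proposal is correct and matches the paper's approach: the paper's proof consists solely of the sentence ``Similar to that of Theorem \ref{main}'', and you have faithfully spelled out what that similarity amounts to, running the same cyclic chain of implications with submodules $L$ in place of elements $m$. Your identification of $(1)\Rightarrow(2)$ as the unique step requiring the $um$-ring hypothesis (to collapse the three-term union of submodules from Theorem \ref{main}(2) into a single equality) is the key observation, and the remaining steps are, as you say, routine translations using only the two-ideal union fact.
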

\begin{proof}
Similar to that of Theorem \ref{main}.
\end{proof}

\begin{remark}
The zero submodule of the $\mathbb{Z}$-module $\mathbb{Z}_4$, is a weakly classical prime submodule (weakly prime ideal) of $\mathbb{Z}_4$, but $(0:_{\mathbb{Z}}\mathbb{Z}_4)=4\mathbb{Z}$ is not a weakly prime ideal of $\mathbb{Z}$.
\end{remark}

\begin{proposition}\label{faith}
Let $R$ be a $um$-ring, $M$ be an $R$-module and $N$ be a proper submodule of $M$.
If $N$ is a weakly classical prime submodule of $M$, then $(N:_RL)$ is a weakly prime ideal of $R$
for every faithful submodule $L$ of $M$ that is not contained in $N$.
\end{proposition}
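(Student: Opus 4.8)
The plan is to verify the definition of a weakly prime ideal directly, reducing everything to the $um$-ring characterization in Theorem \ref{main2}. First I would record that $(N:_RL)$ is a \emph{proper} ideal of $R$: since $L\nsubseteq N$ by hypothesis, we have $1\cdot L=L\nsubseteq N$, so $1\notin(N:_RL)$ and hence $(N:_RL)\neq R$. This is needed because ``weakly prime'' is only defined for proper ideals.

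Next, to test weak primeness, I would take $a,b\in R$ with $0\neq ab\in(N:_RL)$ and aim to show $a\in(N:_RL)$ or $b\in(N:_RL)$, which unwinds to $aL\subseteq N$ or $bL\subseteq N$. By definition of the colon, $ab\in(N:_RL)$ means $abL\subseteq N$. The decisive point is to upgrade this to $0\neq abL\subseteq N$, and this is precisely where faithfulness of $L$ is used: since $L$ is faithful, ${\rm Ann}_R(L)=0$, so from $ab\neq 0$ we conclude $ab\notin{\rm Ann}_R(L)$, i.e. $abL\neq 0$.

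With $0\neq abL\subseteq N$ established, I would invoke the hypothesis that $R$ is a $um$-ring and that $N$ is weakly classical prime, so the equivalence (1)$\Leftrightarrow$(3) of Theorem \ref{main2} applies to the submodule $L$. Condition (3) gives exactly $aL\subseteq N$ or $bL\subseteq N$, that is, $a\in(N:_RL)$ or $b\in(N:_RL)$. Together with properness this shows $(N:_RL)$ is a weakly prime ideal of $R$.

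The main (indeed the only) obstacle is guaranteeing $abL\neq 0$, so that the weakly classical prime property is not applied vacuously; the faithfulness assumption on $L$ is tailored precisely to rule out the possibility that the nonzero element $ab$ annihilates $L$. Everything else is a routine translation between the colon ideal $(N:_RL)$ and the containment conditions on $L$, handed off to Theorem \ref{main2}; the $um$-ring hypothesis is exactly what licenses that theorem's submodule-level characterization.
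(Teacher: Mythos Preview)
Your proposal is correct and follows essentially the same argument as the paper: use faithfulness of $L$ to pass from $0\neq ab\in(N:_RL)$ to $0\neq abL\subseteq N$, then apply Theorem \ref{main2} to conclude $aL\subseteq N$ or $bL\subseteq N$. The only difference is that you make the properness of $(N:_RL)$ explicit, which the paper leaves implicit.
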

\begin{proof}
Assume that $N$ is a weakly classical prime submodule of $M$ and $L$ is a faithful submodule of $M$ 
such that $L\nsubseteq N$. Let $0\neq ab\in(N:_RL)$ for some $a,b\in R$. Then $0\neq abL\subseteq N$, because $L$ is faithful.
Hence Theorem \ref{main2} implies that $aL\subseteq N$ or $bL\subseteq N$, i.e., $a\in(N:_RL)$ or $b\in(N:_RL)$. Consequently
$(N:_RL)$ is a weakly prime ideal of $R$. 
\end{proof}

\begin{proposition}
Let $M$ be an $R$-module and $N$ be a weakly classical prime submodule of $M$.Then
\begin{enumerate}
\item For every $a,b\in R$ and $m\in M$, $(N:_Rabm)=(0:_Rabm)\cup(N:_Ram)\cup(N:_Rbm)$;

\item If $R$ is a $u$-ring, then for every $a,b\in R$ and $m\in M$,
$(N:_Rabm)=(0:_Rabm)$ or $(N:_Rabm)=(N:_Ram)$ or $(N:_Rabm)=(N:_Rbm)$.
\end{enumerate}
\end{proposition}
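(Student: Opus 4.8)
The plan is to establish the set equality in (1) by double inclusion and then deduce (2) from (1) by invoking the $u$-ring hypothesis.

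For the inclusion $(0:_Rabm)\cup(N:_Ram)\cup(N:_Rbm)\subseteq(N:_Rabm)$ I would argue directly from the fact that $N$ is a submodule: if $r\in(0:_Rabm)$ then $rabm=0\in N$; if $r\in(N:_Ram)$ then $ram\in N$ forces $rabm=b(ram)\in N$; and symmetrically $r\in(N:_Rbm)$ gives $rabm=a(rbm)\in N$. In each case $r\in(N:_Rabm)$. For the reverse inclusion, take $r\in(N:_Rabm)$, so $rabm\in N$. If $rabm=0$ then $r\in(0:_Rabm)$ and we are done; otherwise $0\neq ab(rm)\in N$, and the key move is to apply the weakly classical prime hypothesis to the element $rm\in M$ with the ring elements $a,b$. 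This yields $a(rm)=ram\in N$ or $b(rm)=rbm\in N$, that is, $r\in(N:_Ram)$ or $r\in(N:_Rbm)$. Together the two inclusions give (1).

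For (2), I would first record that each of the four sets $(N:_Rabm)$, $(0:_Rabm)$, $(N:_Ram)$, $(N:_Rbm)$ is an ideal of $R$, since $(N:_Rx)=\{r\in R\mid rx\in N\}$ is an ideal for any $x\in M$ (closure under addition and under multiplication by $R$ both follow from $N$ being a submodule). By (1) the ideal $(N:_Rabm)$ is contained in the union of the three ideals $(0:_Rabm)$, $(N:_Ram)$, $(N:_Rbm)$; since $R$ is a $u$-ring, an ideal lying inside a finite union of ideals must sit inside one of them, so $(N:_Rabm)$ is contained in one of the three. Combining this containment with the reverse containment already proved in (1), namely that each of the three ideals is contained in $(N:_Rabm)$, upgrades the inclusion to an equality, giving exactly one of the three stated alternatives.

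I do not expect a genuine obstacle here: the only substantive point is recognizing that the weakly classical prime condition should be tested on $rm$ rather than on $m$, after which (1) is a formal consequence of $N$ being a submodule and (2) is an immediate translation of the $u$-ring definition applied to (1).
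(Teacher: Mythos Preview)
Your proof is correct and follows essentially the same approach as the paper: the key move in both is to apply the weakly classical prime hypothesis to the element $rm$ with the pair $a,b$, and then (2) follows from (1) via the $u$-ring property. If anything, your argument is more complete than the paper's, since you explicitly verify the easy reverse inclusion in (1) and spell out how the $u$-ring containment upgrades to an equality in (2), whereas the paper leaves both of these implicit.
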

\begin{proof}
(1) Let $a,b\in R$ and $m\in M$. Suppose that $r\in(N:_Rabm)$. Then $ab(rm)\in N$. 
If $ab(rm)=0$, then $r\in(0:_Rabm)$. Therefore we assume that $ab(rm)\neq0$.
So, either $a(rm)\in N$ or $b(rm)\in N$. Thus, either $r\in(N:_Ram)$ or $r\in(N:_Rbm)$. Consequently
$(N:_Rabm)=(0:_Rabm)\cup(N:_Ram)\cup(N:_Rbm)$.

(2) Apply part (1).
\end{proof}

\begin{theorem}\label{multiplication}
Let $R$ be a $um$-ring, $M$ be a faithful multiplication $R$-module and $N$ be a proper submodule of $M$.
The following conditions are equivalent:

\begin{enumerate}
\item $N$ is a weakly classical prime submodule of $M$;

\item If $0\neq N_1N_2N_3\subseteq N$ for some submodules $N_1,N_2,N_3$ of $
M$, then either $N_1N_3\subseteq N$ or $N_2N_3\subseteq N$;

\item If $0\neq N_1N_2\subseteq N$ for some submodules $N_1,N_2$ of $M$,
then either $N_1\subseteq N$ or $N_2\subseteq N$;

\item $N$ is a weakly prime submodule of $M$;

\item $(N:_RM)$ is a weakly prime ideal of $R$.
\end{enumerate}
\end{theorem}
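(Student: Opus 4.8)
The plan is to prove the cycle $(1)\Rightarrow(2)\Rightarrow(3)\Rightarrow(4)\Rightarrow(1)$, settling the equivalence of the first four conditions, and then to attach $(5)$ by proving $(1)\Rightarrow(5)$ and $(5)\Rightarrow(4)$. Throughout, the engine is the dictionary of Remark \ref{multi}: since $M$ is multiplication, every submodule has the form $IM$, submodule products collapse to ideal--submodule products ($KL=IL$ when $K=IM$), $KM=K$, and $Km=Im$. This is what lets me translate the submodule--product statements $(2),(3)$ into the ideal form to which Theorem \ref{main2} applies, while the hypotheses ($R$ a $um$-ring, $M$ faithful) are exactly those needed for Theorem \ref{main2} and Proposition \ref{faith}.

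For $(1)\Rightarrow(2)$, given $0\neq N_1N_2N_3\subseteq N$ I would write $N_i=I_iM$ and use Remark \ref{multi} to rewrite $N_1N_2N_3$ as $I_1I_2N_3$; then Theorem \ref{main2}(7), applied to the ideals $I_1,I_2$ and the submodule $N_3$, yields $I_1N_3\subseteq N$ or $I_2N_3\subseteq N$, i.e.\ $N_1N_3\subseteq N$ or $N_2N_3\subseteq N$. The implication $(2)\Rightarrow(3)$ is obtained by specializing $N_3=M$ and using $KM=K$, so that $N_1N_2M=N_1N_2$ and the conclusion $N_iM\subseteq N$ reads $N_i\subseteq N$. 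For $(3)\Rightarrow(4)$, to verify the weakly prime condition I take $0\neq rx\in N$ and apply $(3)$ to $N_1=rM$ and $N_2=Rx$, whose product is $\langle rx\rangle$ by Remark \ref{multi}; this is nonzero and contained in $N$, so $rM\subseteq N$ or $Rx\subseteq N$, that is, $r\in(N:_RM)$ or $x\in N$. Finally $(4)\Rightarrow(1)$ is immediate from Proposition \ref{abs-class}(1).

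To incorporate $(5)$, the implication $(1)\Rightarrow(5)$ is a direct application of Proposition \ref{faith} to the faithful submodule $L=M$ (faithful by hypothesis and not contained in the proper submodule $N$), which gives that $(N:_RM)$ is weakly prime. For $(5)\Rightarrow(4)$ I set $P:=(N:_RM)$, assume $P$ is weakly prime, and take $0\neq rx\in N$ with $r\notin P$, aiming for $x\in N$. Writing $Rx=TM$ with $T=(Rx:_RM)$, the containment $rx\in N$ gives $rTM=R(rx)\subseteq N$, hence $rT\subseteq P$; the goal then reduces to $T\subseteq P$, since that forces $x\in Rx=TM\subseteq PM=N$.

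The main obstacle is the usual pathology of weak primeness: from $rt\in P$ for every $t\in T$ one cannot directly invoke weak primeness of $P$ in the cases where $rt=0$. I would clear these by an additive shift. Since $rx\neq0$ and $x\in TM$, expanding $x$ over generators produces some $t_0\in T$ with $rt_0\neq0$, whence $rt_0\in P\setminus\{0\}$ together with $r\notin P$ forces $t_0\in P$. For a troublesome $t\in T$ with $rt=0$, replacing $t$ by $t+t_0$ gives $r(t+t_0)=rt_0\neq0$, so $t+t_0\in P$ and therefore $t\in P$; thus $T\subseteq P$. The same device also settles the converse $(4)\Rightarrow(5)$ should one prefer to prove it directly: there faithfulness guarantees $abM\neq0$ whenever $0\neq ab\in(N:_RM)$, furnishing a nonzero test element, and the annihilated cases are again absorbed by adding a nonzero one. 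I expect this zero-handling, and not the product bookkeeping, to be the only delicate point of the argument.
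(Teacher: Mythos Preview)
Your argument is correct and follows the paper's scheme $(1)\Rightarrow(2)\Rightarrow(3)\Rightarrow(4)\Rightarrow(1)$ together with $(1)\Rightarrow(5)$ and $(5)\Rightarrow(4)$, using Theorem~\ref{main2}, Remark~\ref{multi}, Proposition~\ref{abs-class}(1) and Proposition~\ref{faith} at exactly the same places. The only genuine divergence is in $(5)\Rightarrow(4)$. The paper argues at the level of ideals: given $0\neq IK\subseteq N$, write $K=JM$, so $0\neq IJ\subseteq(N:_RM)$, and then invoke the Anderson--Smith characterization of weakly prime ideals (if $0\neq IJ\subseteq P$ then $I\subseteq P$ or $J\subseteq P$) to conclude $I\subseteq(N:_RM)$ or $K=JM\subseteq N$. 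Your element-wise route, producing a $t_0\in T$ with $rt_0\neq0$ and then absorbing the $rt=0$ cases by the shift $t\mapsto t+t_0$, is a correct self-contained substitute for that citation; it is longer but avoids appealing to the external ideal-theoretic result. (A minor related point: in $(3)\Rightarrow(4)$ the paper also works with the ideal--submodule form $0\neq IK\subseteq N$, setting $N_1=IM$, $N_2=K$, whereas you check the element-wise condition directly; both are fine.)
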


\begin{proof}
(1)$\Rightarrow$(2) Let $0\neq N_1N_2N_3\subseteq N$ for some submodules $N_1,N_2,N_3$
of $M$. Since $M$ is multiplication, there are ideals $I_1,I_2$ of $R$ such that $N_1=I_1M$ and
$N_2=I_2M$. Therefore $0\neq I_1I_2N_3\subseteq N$, and 
so by Theorem \ref{main2}, $I_1N_3\subseteq N$ or $I_2N_3\subseteq N$.
Thus, either $N_1N_3\subseteq N$ or $N_2N_3\subseteq N$.\newline
(2)$\Rightarrow$(3) Is easy.\newline
(3)$\Rightarrow$(4) Suppose that $0\neq IK\subseteq N$ for some ideal 
$I$ of $R$ and some submodule $K$ of $M$. It is sufficient to set 
$N_1:=IM$ and $N_2=K$ in part (3).\newline
(4)$\Rightarrow$(1) By part (1) of Proposition \ref{abs-class}.\newline
(1)$\Rightarrow$(5) By Proposition \ref{faith}. \newline
(5)$\Rightarrow$(4) Let $0\neq IK\subseteq N$ for some ideal $I$ of $R$ and some
submodule $K$ of $M$. Since $M$ is multiplication, then there is an ideal 
$J$ of $R$ such that $K=JM$. Hence $0\neq IJ\subseteq (N:_{R}M)$ which
implies that either $I\subseteq (N:_{R}M)$ or $J\subseteq {(N:_{R}M)}$.
If $I\subseteq (N:_{R}M),$ then we are done. So, suppose
that $J\subseteq{(N:_{R}M)}$. Thus $K=JM\subseteq N$. 
\end{proof}

\begin{proposition}
\ Let $R$ be a $um$-ring. Let $M$ be a finitely generated faithful
multiplication $R$-module and $N$ a submodule of $M.$ Then the following
conditions are equivalent:
\begin{enumerate}
\item $N$ is a weakly classical prime submodule;

\item $\left( N:_RM\right) $ is a weakly prime ideal of $R$;

\item $N=IM$ for some weakly prime ideal $I$ of $R.$
\end{enumerate}
\end{proposition}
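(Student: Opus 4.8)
The plan is to reduce everything to results already established for faithful multiplication modules, so that almost nothing new has to be proved. The equivalence (1)$\Leftrightarrow$(2) I would obtain directly from Theorem \ref{multiplication}: since $R$ is a $um$-ring and $M$ is a (finitely generated) faithful multiplication module, that theorem applies, and its conditions (1) and (5) are verbatim our (1) and (2). The only preliminary I need is that each of (1), (2), (3) forces $N$ to be a \emph{proper} submodule, so that Theorem \ref{multiplication} is legitimately available. For (1) this is built into the definition of a weakly classical prime submodule. For (2) and (3), note that a weakly prime ideal is by definition proper; since $M$ is multiplication we have $N=(N:_RM)M$, and $N=M$ would force $(N:_RM)=(M:_RM)=R$, while $N=IM=M$ would force $I=R$ by cancellation — either way the relevant ideal would fail to be proper. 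Hence in every case $N\neq M$.

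For (2)$\Rightarrow$(3) I would simply exhibit the required ideal. Because $M$ is multiplication, $N=(N:_RM)M$, so setting $I:=(N:_RM)$ gives $N=IM$ with $I$ weakly prime by hypothesis (2). This direction is purely formal.

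For (3)$\Rightarrow$(2) the substantive point is to identify the given ideal $I$ with the colon ideal $(N:_RM)$. Writing $N=IM$ and using that $M$ is multiplication, so that also $N=(N:_RM)M$, I get the equality $IM=(N:_RM)M$. Here I invoke the standing hypothesis that $M$ is finitely generated faithful multiplication, hence a cancellation module (as recalled in the introduction, \cite{S}); cancellation then yields $I=(N:_RM)$. Therefore $(N:_RM)=I$ is weakly prime, which is exactly (2).

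I expect the cancellation step in (3)$\Rightarrow$(2) to be the only genuinely non-formal ingredient: the implication (2)$\Rightarrow$(3) is immediate from the multiplication property, and (1)$\Leftrightarrow$(2) is a direct citation of Theorem \ref{multiplication}, so the cancellation argument is what actually distinguishes the present statement from that theorem. One should take care that the representation $N=IM$ is pinned down uniquely precisely by finite generation and faithfulness — dropping either hypothesis would break the passage from $IM=(N:_RM)M$ to $I=(N:_RM)$ — but under the given assumptions the chain of equivalences closes cleanly.
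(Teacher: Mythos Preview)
Your proof is correct and follows essentially the same route as the paper: $(1)\Leftrightarrow(2)$ by citing Theorem \ref{multiplication}, $(2)\Rightarrow(3)$ by taking $I=(N:_RM)$ and using $N=(N:_RM)M$, and $(3)\Rightarrow(2)$ via the cancellation property of finitely generated faithful multiplication modules to conclude $I=(N:_RM)$. Your additional discussion of why $N$ must be proper in each case is a welcome bit of care that the paper itself leaves implicit.
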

\begin{proof}
$\left( 1\right) \Leftrightarrow \left( 2\right) .$ By Theorem \ref{multiplication}.\newline
$\left( 2\right) \Rightarrow \left( 3\right) $ \ Since $\left( N:_RM\right) $
is a weakly prime ideal and $N=\left( N:_RM\right) M,$ then condition $\left(
3\right) $ holds. \newline
$\left( 3\right) \Rightarrow \left( 2\right) $ Suppose that $N=IM$ \ for
some weakly prime ideal $I$ of $R.$ Since $M$ is a multiplication module, 
we have $N=\left( N:M\right) M.$ Therefore $N=IM=\left( N:M\right) M$ and
so $I=\left( N:M\right) $, because by \cite[ Corollary to Theorem 9]{S} $M$ is cancellation.
\end{proof}

\begin{theorem}\label{prod1}
Let $M_{1}, M_{2}$ be $R$-modules and
$N_{1}$ be a proper submodule of $M_1$. Then the following conditions are equivalent:
\begin{enumerate}
\item $N=N_{1}\times M_{2}$ is a weakly classical prime submodule of $M=M_{1}\times M_{2}$;

\item $N_1$ is a weakly classical prime submodule of $M_1$ and for each $r,s\in R$ and $m_1\in M_1$
we have $$rsm_1=0,\ rm_1\notin N_1, \ sm_1\notin N_1\Rightarrow rs\in Ann_R(M_2).$$
\end{enumerate}
\end{theorem}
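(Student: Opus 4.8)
The plan is to translate membership in $N = N_1 \times M_2$ into a condition on first coordinates and then handle the equivalence by distinguishing whether the first coordinate of the relevant product vanishes. The key observation, used throughout, is that for $m = (m_1, m_2) \in M$ and $a, b \in R$ one has $abm \in N$ if and only if $abm_1 \in N_1$, while $abm \neq 0$ holds if and only if $abm_1 \neq 0$ or $abm_2 \neq 0$; likewise $am \in N$ (resp. $bm \in N$) is equivalent to $am_1 \in N_1$ (resp. $bm_1 \in N_1$). Once these translations are recorded, both implications reduce to bookkeeping.

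For $(1) \Rightarrow (2)$ I would first check that $N_1$ is weakly classical prime by testing with elements of the form $(m_1, 0)$: if $0 \neq abm_1 \in N_1$, then $(m_1,0)$ witnesses $0 \neq ab(m_1,0) \in N$, and the conclusion $a(m_1,0) \in N$ or $b(m_1,0) \in N$ gives $am_1 \in N_1$ or $bm_1 \in N_1$. To obtain the annihilator condition I would argue by contradiction: given $rsm_1 = 0$ with $rm_1 \notin N_1$ and $sm_1 \notin N_1$, suppose $rs \notin Ann_R(M_2)$ and pick $m_2 \in M_2$ with $rsm_2 \neq 0$; then for $m = (m_1, m_2)$ the product $rsm = (0, rsm_2)$ lies in $N$ and is nonzero, so weak classical primeness forces $rm_1 \in N_1$ or $sm_1 \in N_1$, contradicting the hypothesis.

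For $(2) \Rightarrow (1)$ I would take $a, b \in R$ and $m = (m_1, m_2)$ with $0 \neq abm \in N$, so that $abm_1 \in N_1$, and split into two cases. If $abm_1 \neq 0$, then $0 \neq abm_1 \in N_1$ and the weak classical primeness of $N_1$ yields $am_1 \in N_1$ or $bm_1 \in N_1$, i.e. $am \in N$ or $bm \in N$. If instead $abm_1 = 0$, then since $abm \neq 0$ we must have $abm_2 \neq 0$, whence $ab \notin Ann_R(M_2)$; applying the contrapositive of the displayed implication in $(2)$ (with $r = a$, $s = b$, and using $abm_1 = 0$) then gives $am_1 \in N_1$ or $bm_1 \in N_1$, again the desired conclusion.

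The argument is essentially a careful case analysis, and the only delicate point---precisely the feature distinguishing the weakly classical prime notion from the classical prime one---is the bookkeeping of the nonzero hypothesis $abm \neq 0$. One must keep the two possible sources of nonvanishing separate (the $M_1$-component versus the $M_2$-component), since the annihilator condition in $(2)$ is exactly what is needed to dispatch the case where the product vanishes in the first coordinate but survives in the second.
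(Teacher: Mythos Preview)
Your proof is correct and follows essentially the same approach as the paper's: both directions are handled by the same coordinate translations, the same test elements $(m_1,0)$ and $(m_1,m_2)$, and the same two-case split on whether $rsm_1$ (or $abm_1$) vanishes.
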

\begin{proof}
(1)$\Rightarrow$(2) Suppose that $N=N_{1}\times M_{2}$ is a weakly classical prime submodule of 
$M=M_{1}\times M_{2}$. Let $r,s\in R$ and $m_1\in M_1$ be such that $0\neq rsm_1\in N_1$.
Then $(0,0)\neq rs(m_1,0)\in N$. Thus $r(m_1,0)\in N$ or $s(m_1,0)\in N$, and so $rm_1\in N_1$ or $sm_1\in N_1$.
Consequently $N_1$ is a weakly classical prime submodule of $M_1$. Now, assume that $rsm_1=0$
for some $r,s\in R$ and $m_1\in M_1$ such that $rm_1\notin N_1$ and $sm_1\notin N_1$.
Suppose that $rs\notin{\rm Ann}_R(M_2)$. Therefore there exists $m_2\in M_2$ such that $rsm_2\neq 0$.
Hence $(0,0)\neq rs(m_1,m_2)\in N$, and so $r(m_1,m_2)\in N$ or $s(m_1,m_2)\in N$.
Thus $rm_1\in N_1$ or $sm_1\in N_1$ which is a contradiction. Consequently $rs\in{\rm Ann}_R(M_2)$.\newline
(2)$\Rightarrow$(1) Let $r,s\in R$ and $(m_1,m_2)\in M=M_{1}\times M_{2}$ be such that 
$(0,0)\neq rs(m_1,m_2)\in N=N_{1}\times M_{2}$. First assume that $rsm_1\neq0$. Then by part (2),
$rm_1\in N_1$ or $sm_1\in N_1$. So $r(m_1,m_2)\in N$ or $s(m_1,m_2)\in N$, and thus we are done.
If $rsm_1=0$, then $rsm_2\neq0$. Therefore $rs\notin{\rm Ann}_R(M_2)$, and so part (2) implies that
either $rm_1\in N_1$ or $sm_1\in N_1$. Again we have that $r(m_1,m_2)\in N$ or $s(m_1,m_2)\in N$ which 
shows $N$ is a weakly classical prime submodule of $M$.
\end{proof}

The following two propositions have easy verifications.
\begin{proposition}
Let $M_{1}, M_{2}$ be $R$-modules and $N_{1}$ be a proper submodule of $M_1$. Then 
$N=N_{1}\times M_{2}$ is a classical prime submodule of $M=M_{1}\times M_{2}$ if and only if
$N_1$ is a classical prime submodule of $M_1$.
\end{proposition}
%\begin{proof}

%\end{proof}

\begin{proposition}\label{prod3}
Let $M_{1}, M_{2}$ be $R$-modules and $N_{1}, N_2$ be  proper submodules of $M_1,M_2$, respectively. If 
$N=N_{1}\times N_{2}$ is a weakly classical prime (resp. classical prime) submodule of $M=M_{1}\times M_{2}$, then
$N_1$ is a weakly classical prime (resp. classical prime) submodule of $M_1$ and $N_2$ is a 
weakly classical prime (resp. classical prime) submodule of $M_2$.
\end{proposition}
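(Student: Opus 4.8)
The plan is to realize each factor as a submodule of the product via the canonical inclusions and then pull the hypothesis back. Define $\iota_1\colon M_1\to M=M_1\times M_2$ by $\iota_1(m_1)=(m_1,0)$ and $\iota_2\colon M_2\to M$ by $\iota_2(m_2)=(0,m_2)$; both are $R$-module monomorphisms. The key observation I would establish first is that $\iota_1^{-1}(N)=N_1$ and $\iota_2^{-1}(N)=N_2$. Indeed, $\iota_1(m_1)=(m_1,0)\in N_1\times N_2$ exactly when $m_1\in N_1$ (the second coordinate $0$ always lies in $N_2$), and symmetrically for $\iota_2$. Since $N_1$ and $N_2$ are assumed proper, $\iota_1^{-1}(N)=N_1\neq M_1$ and $\iota_2^{-1}(N)=N_2\neq M_2$.

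For the weakly classical prime statement this is now immediate from Theorem \ref{im}(1): applying that result to the monomorphism $\iota_1$ and the weakly classical prime submodule $N$ of $M$ with $\iota_1^{-1}(N)=N_1\neq M_1$ shows that $N_1$ is weakly classical prime in $M_1$, and the same argument with $\iota_2$ handles $N_2$.

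For the classical prime statement I would argue directly, since Theorem \ref{im} is phrased only in the weakly classical prime setting. Suppose $N$ is classical prime, and take $a,b\in R$, $m_1\in M_1$ with $abm_1\in N_1$. Then $ab(m_1,0)=(abm_1,0)\in N_1\times N_2=N$, so classical primeness of $N$ yields $a(m_1,0)\in N$ or $b(m_1,0)\in N$, i.e.\ $am_1\in N_1$ or $bm_1\in N_1$; thus $N_1$ is classical prime, and $N_2$ is handled symmetrically.

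The verification has essentially no obstacle, but the one point requiring care is the nonzero condition in the weakly classical prime case: when checking $N_1$, one starts from $0\neq abm_1\in N_1$ and must ensure that the lifted element $ab(m_1,0)=(abm_1,0)$ is again nonzero in $M$, which holds precisely because $abm_1\neq 0$. This preservation of nonzero-ness is exactly what is built into the hypothesis of Theorem \ref{im}(1), so invoking that theorem handles it cleanly; in a from-scratch proof it is the only step where the product structure interacts nontrivially with the ``$0\neq$'' clause.
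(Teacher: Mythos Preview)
Your proposal is correct. The paper itself omits the proof, stating only that this proposition has an ``easy verification,'' so there is no detailed argument to compare against. Your approach---pulling back along the canonical inclusions $\iota_i$ and invoking Theorem \ref{im}(1) for the weakly classical prime case, with a direct element check for the classical prime case---is a clean way to supply the details; the direct element-level argument you give for the classical prime case works equally well for the weakly classical prime case (and is likely what the authors had in mind), but routing through Theorem \ref{im}(1) is a legitimate shortcut since that theorem already packages the ``$0\neq abm$ implies $0\neq f(abm)$'' step for monomorphisms.
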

%\begin{proof}

%\end{proof}

\begin{example}
Let $R=\mathbb{Z}$, $M=\mathbb{Z}\times\mathbb{Z}$ and $N=p\mathbb{Z}\times q\mathbb{Z}$ 
where $p,~q$ are two distinct prime integers. Since $p\mathbb{Z},\ q\mathbb{Z}$ are prime ideals 
of $\mathbb{Z}$, then $p\mathbb{Z},\ q\mathbb{Z}$ are weakly classical prime $\mathbb{Z}$-submodules of $\mathbb{Z}$. 
Notice that $(0,0)\neq pq(1,1)=(pq,pq)\in N$, but neither $p(1,1)\in N$ nor
$q(1,1)\in N$. So $N$ is not a weakly classical prime submodule of $M$. 
This example shows that the converse of Proposition \ref{prod3} is not true.
\end{example}

Let $R_i$ be a commutative ring with identity and $M_i$ be an $R_i$-module,
for $i = 1, 2$. Let $R=R_{1}\times R_{2}$. Then $M=M_{1}\times M_{2}$ is an $%
R$-module and each submodule of $M$ is in the form of $N=N_{1}\times N_{2}$ for
some submodules $N_1$ of $M_1$ and $N_2$ of $M_2$.
\begin{theorem}\label{product1}
Let $R=R_{1}\times R_{2}$ be a decomposable ring and $M=M_{1}\times M_{2}$ be an $R$-module where
$M_{1}$ is an $R_{1}$-module and $M_{2}$ is an $R_{2}$-module. Suppose that $N=N_{1}\times M_{2}$ is
a proper submodule of $M$. Then the following conditions are equivalent:
\begin{enumerate}
\item $N_1$ is a classical prime submodule of $M_1$;

\item $N$ is a classical prime submodule of $M$;

\item $N$ is a weakly classical prime submodule of $M$.
\end{enumerate}
\end{theorem}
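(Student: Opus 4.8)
The plan is to establish the cycle of implications $(1)\Rightarrow(2)\Rightarrow(3)\Rightarrow(1)$. The middle implication $(2)\Rightarrow(3)$ needs no work, since every classical prime submodule is weakly classical prime, as observed in the Introduction. Throughout I will lean on two elementary bookkeeping facts about the product decomposition. First, because $N=N_{1}\times M_{2}$ is proper in $M=M_{1}\times M_{2}$, the factor $N_{1}$ is proper in $M_{1}$, so it is meaningful to ask whether $N_{1}$ is classical prime. Second, for any $(x_{1},x_{2})\in M$ one has $(x_{1},x_{2})\in N$ if and only if $x_{1}\in N_{1}$, since the second coordinate automatically lies in $M_{2}$. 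Combined with the componentwise action $(r_{1},r_{2})(m_{1},m_{2})=(r_{1}m_{1},r_{2}m_{2})$, this reduces every membership question in $M$ to a question about first coordinates in $M_{1}$.

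For $(1)\Rightarrow(2)$, assume $N_{1}$ is classical prime and take $a=(a_{1},a_{2})$, $b=(b_{1},b_{2})$ in $R$ and $m=(m_{1},m_{2})\in M$ with $abm\in N$. By the second observation this says precisely $a_{1}b_{1}m_{1}\in N_{1}$, so classical primeness of $N_{1}$ yields $a_{1}m_{1}\in N_{1}$ or $b_{1}m_{1}\in N_{1}$; translating back, $am\in N$ or $bm\in N$, whence $N$ is classical prime. (One could instead quote the earlier proposition that $N_{1}\times M_{2}$ is classical prime iff $N_{1}$ is, after viewing $M_{1}$ and $M_{2}$ as $R$-modules via the two projections.)

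The substance lies in $(3)\Rightarrow(1)$. Assume $N$ is weakly classical prime and let $a_{1},b_{1}\in R_{1}$, $m_{1}\in M_{1}$ satisfy $a_{1}b_{1}m_{1}\in N_{1}$; I must produce $a_{1}m_{1}\in N_{1}$ or $b_{1}m_{1}\in N_{1}$. The idea is to lift $(a_{1},b_{1},m_{1})$ to a triple in $M$ with \emph{nonzero} product, so that weak classical primeness applies. If $a_{1}b_{1}m_{1}\neq0$, take $a=(a_{1},0)$, $b=(b_{1},0)$, $m=(m_{1},0)$: then $0\neq abm=(a_{1}b_{1}m_{1},0)\in N$, and $(3)$ together with the first-coordinate translation gives the conclusion. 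The remaining case $a_{1}b_{1}m_{1}=0$ is the main obstacle, because the naive lift has zero product and $(3)$ is then silent. Here I exploit the nonzero identity $1_{R_{2}}$ of the second factor together with a nonzero $m_{2}\in M_{2}$: setting $a=(a_{1},1_{R_{2}})$, $b=(b_{1},1_{R_{2}})$, $m=(m_{1},m_{2})$ gives $abm=(a_{1}b_{1}m_{1},m_{2})=(0,m_{2})$, which lies in $N$ (as $0\in N_{1}$) and is nonzero since $m_{2}\neq0$. Now $(3)$ forces $am\in N$ or $bm\in N$, i.e.\ $a_{1}m_{1}\in N_{1}$ or $b_{1}m_{1}\in N_{1}$, as required.

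I expect this last case to be the delicate point, and it is worth recording that it uses a nonzero element of $M_{2}$: the second factor, carrying the nonzero identity $1_{R_{2}}$, is exactly what "witnesses" a nonzero product and thereby upgrades weak classical primeness of $N$ into classical primeness of $N_{1}$. A shorter but less self-contained route to $(3)\Rightarrow(1)$ is to apply Theorem \ref{prod1} with the single ring $R=R_{1}\times R_{2}$: its condition (2) says $N_{1}$ is weakly classical prime and that whenever $r_{1}s_{1}m_{1}=0$ with $r_{1}m_{1}\notin N_{1}$ and $s_{1}m_{1}\notin N_{1}$ one has $(r_{1}s_{1},1_{R_{2}})\in{\rm Ann}_{R}(M_{2})$; since this membership fails, no such classical triple-zero exists, and a weakly classical prime submodule without classical triple-zeros is classical prime.
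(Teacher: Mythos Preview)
Your proof is correct and follows essentially the same route as the paper: $(1)\Rightarrow(2)$ and $(2)\Rightarrow(3)$ are identical, and for $(3)\Rightarrow(1)$ both you and the paper lift the data to $M$ using the identity $1_{R_2}$ and a nonzero $m_2\in M_2$ to force a nonzero product. The only difference is that the paper does not split into the cases $a_1b_1m_1\neq0$ and $a_1b_1m_1=0$; it simply takes $0\neq m'\in M_2$ and observes $0\neq(a_1,1)(b_1,1)(m_1,m')\in N$ in one stroke, which covers both of your cases at once.
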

\begin{proof}
(1)$\Rightarrow$(2) Let $(a_1,a_2)(b_1,b_2)(m_1,m_2)\in N$ for some $(a_1,a_2),(b_1,b_2)\in R$ and $(m_1,m_2)\in M$. 
Then $a_1b_1m_1\in N_1$ so either $a_1m_1\in N_1$ or $b_1m_1\in N_1$ which shows that either $(a_1,a_2)(m_1,m_2)\in N$
or $(b_1,b_2)(m_1,m_2)\in N$. Consequently $N$ is a classical prime submodule of $M$.\newline
(2)$\Rightarrow$(3) It is clear that every classical prime submodule is a weakly classical prime submodule.\newline
(3)$\Rightarrow$(1) Let $abm\in N_1$ for some $a,b\in R_1$ and $m\in M_1$. We may assume that $0\neq m'\in M_2$.
Therefore $0\neq(a,1)(b,1)(m,m')\in N$. So either $(a,1)(m,m')\in N$ or $(b,1)(m,m')\in N$. Therefore
$am\in N_1$ or $bm\in N_1$. Hence $N_1$ is a classical prime submodule of $M_1$.
\end{proof}

\begin{proposition}\label{product2}
Let $R=R_{1}\times R_{2}$ be a decomposable ring and $M=M_{1}\times M_{2}$ be an $R$-module where $M_{1}$
is an $R_{1}$-module and $M_{2}$ is an $R_{2}$-module. Suppose that $N_1,N_2$ are proper submodules
of $M_1,M_2$, respectively. If $N=N_{1}\times
N_{2}$ is a weakly classical prime submodule of $M$, then $N_1$ is a weakly
prime submodule of $M_1$ and $N_2$ is a weakly prime submodule of $M_2$.
\end{proposition}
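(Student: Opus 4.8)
The plan is to read off the weakly-prime test for $N_1$ (and, symmetrically, for $N_2$) as a single instance of the weakly-classical-prime test for $N=N_1\times N_2$, exploiting the two central idempotents $(1,0)$ and $(0,1)$ of the decomposable ring $R=R_1\times R_2$ together with the properness of the complementary factor. Recall that to verify that $N_1$ is weakly prime I must start from $a\in R_1$ and $m_1\in M_1$ with $0\neq am_1\in N_1$ and produce either $m_1\in N_1$ or $a\in(N_1:_{R_1}M_1)$; in fact the construction will yield the first alternative outright.

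First I would fix, once and for all, an element $y_2\in M_2\setminus N_2$, which exists precisely because $N_2$ is a \emph{proper} submodule of $M_2$. Then, given $a$ and $m_1$ as above, I would feed the triple $\alpha=(1,0)$, $\beta=(a,1)$, $\mu=(m_1,y_2)$ into the weakly-classical-prime hypothesis for $N$. A direct computation gives $\alpha\beta\mu=(am_1,0)$, which is nonzero (because $am_1\neq 0$) and lies in $N=N_1\times N_2$ (because $am_1\in N_1$ and $0\in N_2$). Hence the hypothesis forces $\alpha\mu\in N$ or $\beta\mu\in N$, that is, $(m_1,0)\in N$ or $(am_1,y_2)\in N$.

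The decisive step is ruling out the second option: since $y_2\notin N_2$, we have $(am_1,y_2)\notin N_1\times N_2$, so only $(m_1,0)\in N$ survives, giving $m_1\in N_1$. This already shows that $N_1$ is weakly prime (indeed it yields the stronger fact that a nonzero element of $N_1$ can never be ``divided out'' of $N_1$). The argument for $N_2$ is the mirror image: I would fix $y_1\in M_1\setminus N_1$ (using that $N_1$ is proper) and apply the hypothesis to $\alpha=(0,1)$, $\beta=(1,b)$, $\mu=(y_1,m_2)$ for $b\in R_2$, $m_2\in M_2$ with $0\neq bm_2\in N_2$. I expect the only real subtlety to be bookkeeping: one must check that the middle product $\alpha\beta\mu$ is genuinely nonzero and genuinely inside $N$, and it is exactly the properness of the complementary factor ($N_2$ in the first case, $N_1$ in the second) that supplies the witness eliminating the unwanted alternative. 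No deeper obstacle arises.
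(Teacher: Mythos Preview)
Your proof is correct and is essentially identical to the paper's own argument: the paper likewise fixes $y\in M_2\setminus N_2$, applies the weakly classical prime hypothesis to $(1,0)(a,1)(m_1,y)=(am_1,0)\in N$, rules out $(a,1)(m_1,y)\in N$ via $y\notin N_2$, and concludes $m_1\in N_1$. The symmetric case is handled the same way.
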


\begin{proof}
Suppose that $N=N_{1}\times N_{2}$ is a weakly classical prime submodule of $%
M$. By hypothesis, there exist $x\in M_1\backslash N_1$ and $y\in M_2\backslash N_2$.
First we show that $N_1$ is a weakly prime submodule of $M_1$. 
Let $0\neq am_1\in N_{1}$ for some $a\in R_{1}$ and $m_1\in M_{1}$. Then $0\neq
\left( 1,0\right)\left(a,1\right) \left(m_1,y\right) \in N_{1}\times N_{2}=N
$. Notice that if $\left( a,1\right) \left( m_1,y\right) \in N_{1}\times N_{2}=N$, 
then $y\in N_2$ which is a contradiction. So we get $\left(1,0\right)\left(m_1,y\right) \in N_{1}\times N_{2}=N$. 
Thus $m_1\in N_{1}$. Hence $N_{1}$ is a weakly prime submodule
of $M_{1}$. A similar argument shows that $N_{2}$ is a weakly prime
submodule of $M_{2}$.
\end{proof}

The following example shows that the converse of Proposition \ref{product2} is not true in general.
\begin{example}
Let $R=M=\mathbb{Z}\times\mathbb{Z}$ and $N=p\mathbb{Z}\times q\mathbb{Z}$ 
where $p,~q$ are two distinct prime integers. Since $p\mathbb{Z},\ q\mathbb{Z}$ are prime ideals of $\mathbb{Z}$,
then $p\mathbb{Z},\ q\mathbb{Z}$ are weakly prime (weakly classical prime) $\mathbb{Z}$-submodules of $\mathbb{Z}$. 
Notice that $(0,0)\neq(p,1)(1,q)(1,1)=(p,q)\in N$, but neither $(p,1)(1,1)\in N$ nor
$(1,q)(1,1)\in N$. So $N$ is not a weakly classical prime submodule of $M$. 
\end{example}

\begin{theorem}\label{product3}
Let $R=R_{1}\times R_{2}\times R_3$ be a decomposable ring and $M=M_{1}\times M_{2}\times M_3$ be an $R$-module where
$M_{1}$ is an $R_{1}$-module, $M_{2}$ is an $R_{2}$-module and $M_{3}$ is an $R_{3}$-module. 
If $N$ is a weakly classical prime submodule of $M$, then either $N=\{(0,0,0)\}$ or $N$ is a classical prime submodule of $M$.
\end{theorem}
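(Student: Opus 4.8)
The plan is to exploit the idempotent decomposition of $R$. Since $R=R_1\times R_2\times R_3$, every submodule of $M$ splits as $N=N_1\times N_2\times N_3$ with $N_i$ a submodule of $M_i$, and $N$ being proper means some $N_i\neq M_i$. I assume $N\neq\{(0,0,0)\}$ and aim to show $N$ is classical prime. Write $e_1=(1,0,0)$, $e_2=(0,1,0)$, $e_3=(0,0,1)$ for the orthogonal idempotents of $R$, so that $e_ie_i=e_i$ and $e_ie_j=0$ for $i\neq j$. Throughout I assume each $M_i\neq0$, the intended nondegenerate setting (otherwise the corresponding factor is inert and $M$ is really a product of fewer modules).

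The first and main step is to establish a structural constraint: if $N_i\neq0$ for some index $i$, then, writing $\{j,k\}=\{1,2,3\}\setminus\{i\}$, at least one of $N_j=M_j$, $N_k=M_k$ must hold. To prove this I argue by contradiction: suppose $N_i\neq0$, $N_j\neq M_j$ and $N_k\neq M_k$, and pick $0\neq x_i\in N_i$, $\mu_j\in M_j\setminus N_j$, $\mu_k\in M_k\setminus N_k$. Set $a=e_i+e_j$, $b=e_i+e_k$, and let $m$ be the element whose $i,j,k$ coordinates are $x_i,\mu_j,\mu_k$. Orthogonality gives $ab=e_i$, so $abm$ has $i$-th coordinate $x_i\in N_i$ and all other coordinates $0$; hence $0\neq abm\in N$. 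However $am$ has $j$-th coordinate $\mu_j\notin N_j$, so $am\notin N$, and $bm$ has $k$-th coordinate $\mu_k\notin N_k$, so $bm\notin N$. This contradicts that $N$ is weakly classical prime, proving the constraint.

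Applying the constraint to each nonzero coordinate forces at least two of the $N_i$ to be full (equal to $M_i$): a configuration with a single nonzero coordinate, such as $N_1\times 0\times 0$, is immediately excluded, and the configurations with two or three nonzero coordinates each pin down two full coordinates. Since $N$ is proper, exactly two coordinates are full, so after relabelling $N=M_1\times M_2\times N_3$ with $N_3\subsetneq M_3$ (where $N_3$ may be zero or a proper nonzero submodule). For such $N$ it suffices to show that $N_3$ is classical prime in $M_3$, since coordinates $1$ and $2$ always lie in $N$. Suppose $N_3$ were not classical prime: choose $a_3,b_3\in R_3$, $m_3\in M_3$ with $a_3b_3m_3\in N_3$ but $a_3m_3\notin N_3$ and $b_3m_3\notin N_3$, and pick $0\neq x_1\in M_1=N_1$. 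Then $a=(1,0,a_3)$, $b=(1,0,b_3)$, $m=(x_1,0,m_3)$ give $abm=(x_1,0,a_3b_3m_3)$, a nonzero element of $N$, while the third coordinates show $am\notin N$ and $bm\notin N$. This contradiction forces $N_3$ to be classical prime, and hence $N$ to be classical prime.

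The main obstacle is the passage in the third paragraph: one must guarantee that a nonzero part of $N$ (needed to keep $abm\neq0$) can be placed in a coordinate different from the one that hosts the failure of the prime condition. This is precisely where having three factors is essential: there is always a hub coordinate carrying a nonzero part of $N$ together with one or two further coordinates free to carry the two defects $am\notin N$ and $bm\notin N$ independently. The single-nonzero-coordinate case $N_1\times 0\times 0$ is ruled out exactly because it leaves no room for two separate defects. The only delicate bookkeeping is to ensure the hub coordinate (a full coordinate with $M_i\neq0$) is distinct from the witness coordinate, which is automatic once at least two of the $M_i$ are nonzero.
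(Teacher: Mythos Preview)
Your proof is correct, and your first structural step---using the orthogonal idempotents to force, from $N_i\neq 0$, that $N_j=M_j$ or $N_k=M_k$---is the same idea as the paper's (the paper writes the two scalars as $(1,r,1)$ and $(1,1,s)$ with $r\in(N_2:_{R_2}M_2)$, $s\in(N_3:_{R_3}M_3)$, but the mechanism is identical). The second half, however, is genuinely different. The paper stops after obtaining \emph{one} full coordinate, say $N=N_1\times M_2\times N_3$, and then invokes the earlier nilpotency result (Theorem~\ref{T2}): since $N$ is assumed not classical prime, one must have $(N:_RM)^2N=0$, yet $(0,1,0)\in(N:_RM)$ and $(0,1,0)^2N=\{0\}\times M_2\times\{0\}\neq 0$, a contradiction. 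You instead iterate the structural constraint to obtain \emph{two} full coordinates, reduce to $N=M_1\times M_2\times N_3$, and then directly verify that $N_3$ is classical prime by manufacturing a nonzero witness in the first coordinate. Your route is entirely self-contained and avoids the dependence on Theorem~\ref{T2}; the paper's route is shorter once that theorem is available. Both proofs tacitly need each $M_i\neq 0$ (the paper to ensure $(0,1,0)^2N\neq 0$, you to rule out the single-nonzero-coordinate configuration and to supply the nonzero $x_1$), which you acknowledge explicitly.
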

\begin{proof}
Since $\{(0,0,0)\}$ is a weakly classical prime submodule in any module, we may assume
that $N=N_{1}\times N_{2}\times N_3\neq\{(0,0,0)\}$. We assume that $N$ is not a classical prime submodule of $M$
and reach a contradiction. Without loss of generality we may assume that $N_1\neq0$ and so there is $0\neq n\in N_1$.
We claim that $N_2=M_2$ or $N_3=M_3$. Suppose that there are $m_2\in M_2\backslash N_2$ and $m_3\in M_3\backslash N_3$.
Get $r\in(N_2:_{R_2}M_2)$ and $s\in(N_3:_{R_3}M_3)$. Since
$(0,0,0)\neq(1,r,1)(1,1,s)(n,m_2,m_3)=(n,rm_2,sm_3)\in N$, then $(1,r,1)(n,m_2,m_3)=(n,rm_2,m_3)\in N$
or $(1,1,s)(n,m_2,m_3)=(n,m_2,sm_3)\in N$. Therefore either $m_3\in N_3$ or $m_2\in N_2$, a contradiction.
Hence $N=N_{1}\times M_{2}\times N_3$ or $N=N_{1}\times N_{2}\times M_3$. Let $N=N_{1}\times M_{2}\times N_3$.
Then $(0,1,0)\in(N:_RM)$. Clearly $(0,1,0)^2N\neq\{(0,0,0)\}$. So $(N:_{R}M)^{2}N\neq\{(0,0,0)\}$ which
is a contradiction, by Theorem \ref{T2}. In the case when $N=N_{1}\times N_{2}\times M_3$ we have that
$(0,0,1)\in(N:_RM)$ and similar to the previous case we reach a contradiction.
\end{proof}

\begin{theorem}\label{flat}
Let $R$ be a $um$-ring and $M$ be an $R$-module.
\end{theorem}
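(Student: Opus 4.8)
The plan is to transport the colon-submodule characterisation of weakly classical prime submodules across the functor $F \otimes_R -$. Since $R$ is a $um$-ring and both $M$ and $F \otimes M$ are $R$-modules, condition (2) of Theorem \ref{main2} applies to each. Thus $N$ is weakly classical prime in $M$ if and only if, for all $a,b \in R$, at least one of the equalities $(N:_M ab) = (0:_M ab)$, $(N:_M ab) = (N:_M a)$, $(N:_M ab) = (N:_M b)$ holds; and $F \otimes N$ is weakly classical prime in $F \otimes M$ if and only if the analogous three equalities, with $N,M$ replaced by $F\otimes N, F\otimes M$, hold for all $a,b$. The whole argument then reduces to matching these equalities on the two sides.

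The key lemma I would prove first is the colon identity $(F \otimes N :_{F\otimes M} c) = F \otimes (N :_M c)$ for each $c \in R$. Fix $c$ and consider the exact sequence $0 \to (N:_M c) \to M \xrightarrow{\psi} M/N$ with $\psi(x) = cx + N$, so that $(N:_M c) = \ker\psi$. Because $F$ is flat, the functor $F \otimes_R -$ is exact and preserves kernels, hence $F \otimes (N:_M c) = \ker(1 \otimes \psi)$ as a submodule of $F \otimes M$. Using the flat identification $F \otimes (M/N) \cong (F\otimes M)/(F\otimes N)$, the map $1 \otimes \psi$ is $y \mapsto cy + (F\otimes N)$, whose kernel is exactly $(F\otimes N :_{F\otimes M} c)$. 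Taking $c = ab$, $c=a$, $c=b$, and separately replacing $N$ by $0$, gives $F\otimes(N:_M ab) = (F\otimes N:_{F\otimes M}ab)$, the corresponding identities for $(N:_M a)$ and $(N:_M b)$, and $F\otimes(0:_M ab) = (0:_{F\otimes M}ab)$.

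Next I would invoke faithful flatness in the form: for submodules $A \subseteq B$ of $M$, one has $A = B$ if and only if $F \otimes A = F \otimes B$, since $(F\otimes B)/(F\otimes A) \cong F \otimes (B/A)$ vanishes precisely when $B/A = 0$. Each of $(0:_M ab)$, $(N:_M a)$, $(N:_M b)$ is contained in $(N:_M ab)$, so for a fixed pair $(a,b)$ each of the three candidate equalities holds in $M$ if and only if the equality obtained by applying $F \otimes -$ holds in $F \otimes M$; by the colon identities the latter are exactly the three equalities of Theorem \ref{main2}(2) for $F \otimes N$. Quantifying over all $(a,b)$ and applying Theorem \ref{main2} on both sides yields the equivalence. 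The same reflection principle applied to $N \subseteq M$ shows $N$ is proper if and only if $F \otimes N$ is proper, so the two weakly-classical-prime conditions are genuinely comparable.

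The main obstacle, and the reason the $um$-ring hypothesis is essential, is that the plain characterisation of Theorem \ref{main}(2) is a union, $(N:_M ab) = (0:_M ab) \cup (N:_M a) \cup (N:_M b)$, and $F \otimes -$ does not preserve unions of submodules; routing the proof through the ``one of three equalities'' form of Theorem \ref{main2}(2) is what makes each clause a genuine submodule equality that transports. The only remaining care is with the two standard tensor identifications used above, namely that flatness makes $F \otimes -$ turn submodule inclusions into inclusions and commute with passage to $M/N$, and that faithful flatness lets one reflect equalities of nested submodules; both are immediate and nothing deeper is required.
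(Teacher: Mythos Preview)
Your approach is essentially the same as the paper's: both route through the three-equality characterisation of Theorem~\ref{main2}(2), transport each equality across $F\otimes_R-$ via the colon identity $(F\otimes N:_{F\otimes M}c)=F\otimes(N:_Mc)$, and use faithful flatness to reflect equalities (and properness) back. The only difference is that you prove the colon identity inline by the exact-sequence argument, whereas the paper simply cites it as \cite[Lemma~3.2]{A}; your added remark on why the $um$-ring hypothesis is needed (to avoid having to transport a union of submodules) is a nice clarification not present in the paper.
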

\begin{enumerate}
\item If $F$ is a flat $R$-module and $N$ is a weakly classical prime 
submodule of $M$ such that $F\otimes N\neq F\otimes M,$ then $%
F\otimes N$ is a weakly classical prime submodule of $F\otimes M.$

\item Suppose that $F$ is a faithfully flat $R$-module. Then $N$ is a
weakly classical prime submodule of $M$ if and only if $F\otimes N$ is a
weakly classical prime submodule of $F\otimes M.$
\end{enumerate}
\begin{proof}
$\left( 1\right) $ Let $a,b\in R$. Then by Theorem \ref{main2}, either $\left(
N:_Mab\right) =\left( 0:_Mab\right) $ or $\left( N:_Mab\right) =\left(
N:_Ma\right) $ or $\left( N:_Mab\right) =\left( N:_Mb\right) $. Assume that $
\left( N:_Mab\right) =\left(0:_Mab\right) $. Then by \cite[Lemma 3.2]{A}, 
$$\left( F\otimes N:_{F\otimes M}ab\right) =F\otimes \left( N:_Mab\right) =F\otimes \left( 0:_Mab\right)
$$$$\hspace{3cm}=\left( F\otimes 0:_{F\otimes M}ab\right)=\left( 0:_{F\otimes M}ab\right).$$
Now, suppose that $\left( N:_Mab\right) =\left(N:_Ma\right)$. Again by \cite[Lemma 3.2]{A}, 
$$\left( F\otimes N:_{F\otimes M}ab\right) =F\otimes \left( N:_Mab\right) =F\otimes \left( N:_Ma\right)
$$$$\hspace{0.7cm}=\left( F\otimes N:_{F\otimes M}a\right).$$
Similarly, we can show that if $\left( N:_Mab\right) =\left( N:_Mb\right) $,
then $\left( F\otimes N:_{F\otimes M}ab\right)=\left( F\otimes N:_{F\otimes M}b\right).$
Consequently by Theorem \ref{main2} we deduce that $F\otimes N$ is a
weakly classical prime submodule of $F\otimes M.$

$\left( 2\right) $ Let $N$ be a weakly classical prime submodule of $M$
and assume that $F\otimes N=F\otimes M$. Then $0\rightarrow F\otimes N%
\overset{\subseteq }{\rightarrow }F\otimes M\rightarrow 0$ is an exact
sequence. Since $F$ is a faithfully flat module, $0\rightarrow N\overset{%
\subseteq }{\rightarrow }M\rightarrow 0$ is an exact sequence. So $N=M$,
which is a contradiction. So $F\otimes N\neq F\otimes M$. Then $F\otimes N$
is a weakly classical prime submodule by $\left( 1\right) $. Now for the
converse, let $F\otimes N$ be a weakly classical prime submodule of $%
F\otimes M$. We have $F\otimes N\neq F\otimes M$ and so $N\neq M$. Let $%
a,b\in R$. Then $\left( F\otimes N:_{F\otimes M}ab\right) =\left( 0:_{F\otimes M}ab\right) $  
or $\left( F\otimes N:_{F\otimes M}ab\right) =\left( F\otimes
N:_{F\otimes M}a\right) $ or $\left( F\otimes N:_{F\otimes M}ab\right) =\left( F\otimes N:_{F\otimes M}b\right) $
by Theorem \ref{main2}. Suppose that $\left( F\otimes N:_{F\otimes M}ab\right) =\left( 
0:_{F\otimes M}ab\right)$. Hence 
$$F\otimes \left( N:_Mab\right)=\left( F\otimes N:_{F\otimes M}ab\right)=\left(
0:_{F\otimes M}ab\right)
$$$$\hspace{2.6cm}=\left( F\otimes 0:_{F\otimes M}ab\right)=F\otimes \left( 0:_Mab\right).$$
Thus  $0\rightarrow F\otimes \left( 0:_Mab\right) \overset{\subseteq }{%
\rightarrow }F\otimes \left( N:_Mab\right) \rightarrow 0$ is an exact
sequence. Since $F$ is a faithfully flat module, $0\rightarrow \left(
0:_Mab\right) \overset{\subseteq }{\rightarrow }\left( N:_Mab\right)
\rightarrow 0$ is an exact sequence which implies that $\left( N:_Mab\right)
=\left( 0:_Mab\right) $. With a similar argument we can deduce that if 
$\left( F\otimes N:_{F\otimes M}ab\right) =\left( F\otimes N:_{F\otimes M}a\right) $ or 
$\left( F\otimes N:_{F\otimes M}ab\right) =\left( F\otimes N:_{F\otimes M}b\right)$, then 
$\left( N:_Mab\right)=\left( N:_Ma\right)$ or $\left( N:_Mab\right)=\left( N:_Mb\right) $.
Consequently $N$ is a weakly classical prime
submodule of $M$ by Theorem \ref{main2}.
\end{proof}

\begin{corollary}
Let $R$ be a $um$-ring, $M$ be an $R$-module and $X$ be an indeterminate. If $N$ is a weakly classical prime submodule of $M$,
then $N[X]$ is a weakly classical prime submodule of $M[X]$.
\end{corollary}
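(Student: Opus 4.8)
The plan is to verify directly, via the characterisation of Theorem~\ref{main2}, that $N[X]$ is weakly classical prime over $R[X]$. So I would take $f,g\in R[X]$ and $h\in M[X]$ with $0\neq fgh\in N[X]$ and aim to show $fh\in N[X]$ or $gh\in N[X]$. Writing $c(f),c(g)$ for the ideals of $R$ generated by the coefficients of $f,g$ and $c(h)$ for the $R$-submodule of $M$ generated by the coefficients of $h$, the key observation is that one direction is immediate: every coefficient of $fh$ lies in $c(f)c(h)$, so the containment $c(f)c(h)\subseteq N$ already forces $fh\in N[X]$ (and likewise $c(g)c(h)\subseteq N$ forces $gh\in N[X]$). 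Thus the whole problem reduces to proving $c(f)c(h)\subseteq N$ or $c(g)c(h)\subseteq N$, a statement about submodules of $M$ over the base ring $R$.

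First I would convert the hypothesis $0\neq fgh\in N[X]$ into a statement about these content modules. Since each coefficient of $fgh$ is a sum of products $a_ib_jc_k$ with $a_i,b_j$ coefficients of $f,g$ and $c_k$ a coefficient of $h$, we have $0\neq c(fgh)\subseteq c(f)c(g)c(h)$, so in particular $c(f)c(g)c(h)\neq 0$. To push the containment $c(fgh)\subseteq N$ up to the full product I would apply the module form of the Dedekind--Mertens content formula (first to $f$ and $gh$, then to $g$ and $h$), obtaining a relation of the shape $c(f)^{p}c(g)^{q}c(h)\subseteq N$ for suitable exponents $p,q$ depending on $\deg f,\deg g,\deg h$.

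The remaining step is a power-peeling argument based on Theorem~\ref{main2}(7), which is available because $R$ is a $um$-ring. If the product $c(f)^{p}c(g)^{q}c(h)$ is nonzero, then grouping the factors as the two ideals $c(f)^{p}$ and $c(g)^{q}$ acting on the submodule $c(h)$ and applying (7) gives $c(f)^{p}c(h)\subseteq N$ or $c(g)^{q}c(h)\subseteq N$; say the former. Writing $c(f)^{p}c(h)=c(f)\,\bigl(c(f)^{p-1}c(h)\bigr)$ and applying (7) again peels off one power at a time, and because dropping a power only enlarges the product the nonzero hypothesis is automatically preserved along the recursion, so after finitely many steps I reach $c(f)c(h)\subseteq N$, as desired.

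The main obstacle is exactly the ``weakly'' (nonzero) proviso in Theorem~\ref{main2}(7): the high-power product $c(f)^{p}c(g)^{q}c(h)$ produced by Dedekind--Mertens can collapse to $0$ even though $0\neq c(fgh)\subseteq c(f)c(g)c(h)$, in which case (7) cannot be invoked. I would dispose of this by splitting on whether $N$ is classical prime. If $N$ is classical prime, the corresponding characterisation carries no nonzero hypothesis, so Dedekind--Mertens followed by unrestricted peeling yields $c(f)c(h)\subseteq N$ or $c(g)c(h)\subseteq N$ outright (in fact showing $N[X]$ is classical prime). If $N$ is weakly classical prime but not classical prime, then Theorem~\ref{T2} gives $(N:_RM)^2N=0$, which together with the classical triple-zero analysis of Theorem~\ref{le1} pins down the degenerate vanishing case and lets me conclude. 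Handling this collapse cleanly is the crux of the argument; the content/Dedekind--Mertens reduction and the peeling are otherwise routine.
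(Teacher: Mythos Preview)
The paper's proof is a one-line application of Theorem~\ref{flat}: since $R[X]$ is a flat $R$-module, that theorem gives directly that $R[X]\otimes_R N\simeq N[X]$ is weakly classical prime in $R[X]\otimes_R M\simeq M[X]$. No content modules, no Dedekind--Mertens, no peeling. Note in particular that Theorem~\ref{flat} treats $F\otimes M$ as an $R$-module, so the corollary (as proved in the paper) asserts that $N[X]$ is weakly classical prime in $M[X]$ \emph{as an $R$-module}. You are instead attacking the $R[X]$-module statement, which is a different (stronger) claim; if you only need the $R$-module statement, then $f,g\in R$ are constants, $c(f)c(g)c(h)=c(fgh)$ exactly, Dedekind--Mertens is irrelevant, and Theorem~\ref{main2}(3) with $L=c(h)$ finishes in one line.

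For the $R[X]$-module reading your outline actually targets, there is a genuine gap at the step you yourself flag as the crux. From $0\neq fgh\in N[X]$ you get $0\neq c(fgh)\subseteq c(f)c(g)c(h)$, but Dedekind--Mertens only places the \emph{high-power} product $c(f)^{p}c(g)^{q}c(h)$ inside $N$, and this product may vanish while $c(f)c(g)c(h)$ does not. In that case Theorem~\ref{main2}(7) is unavailable, and the tools you invoke do not rescue you: Theorem~\ref{T2} says $(N:_RM)^2N=0$ when $N$ is not classical prime, but nothing connects $c(f),c(g),c(h)$ to $(N:_RM)$ or to $N$ itself, so the conclusion $(N:_RM)^2N=0$ gives no information about these particular content modules. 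Theorem~\ref{le1} likewise requires a hypothesis of the form $abK\subseteq N$ together with the absence of classical triple-zeros, and you have neither the containment $c(f)c(g)c(h)\subseteq N$ (only the possibly-zero high-power version is known to lie in $N$) nor any control over triple-zeros among the coefficients. So the degenerate branch is not handled, and without it the argument does not go through.
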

\begin{proof}
Assume that $N$ is a weakly classical prime submodule of $M$. Notice that $R[X]$ is a flat $R$-module.
Then by Theorem \ref{flat}, $R[X]\otimes N\simeq N[X]$ is a weakly classical prime submodule of $R[X]\otimes M\simeq M[X]$.
\end{proof}

\vspace{3mm} \noindent \footnotesize 
\begin{minipage}[b]{10cm}
Hojjat Mostafanasab \\
Department of Mathematics and Applications, \\
University of Mohaghegh Ardabili, \\
P. O. Box 179, Ardabil, Iran. \\
Email: h.mostafanasab@uma.ac.ir, \hspace{1mm} h.mostafanasab@gmail.com
\end{minipage}

\vspace{3mm} \noindent \footnotesize
\begin{minipage}[b]{10cm}
\"{U}nsal Tekir\\
Department of Mathematics, \\ 
Marmara University, \\ 
Ziverbey, Goztepe, Istanbul 34722, Turkey. \\
Email: utekir@marmara.edu.tr
\end{minipage}

\vspace{3mm} \noindent \footnotesize
\begin{minipage}[b]{10cm}
K\"{u}r\c{s}at Hakan Oral\\
Department of Mathematics,\\ 
Yildiz Technical University,\\ 
Davutpasa Campus, Esenler, Istanbul, Turkey.\\
Email: khoral@yildiz.edu.tr
\end{minipage}

\end{document}